

\documentclass{tran-l}
\usepackage{graphics}
\usepackage{mathtools}
\usepackage{tipa}
\usepackage{amssymb}
\usepackage{amsmath}
\usepackage{color}

\vfuzz2pt 

\usepackage{amsmath}


\makeatletter
\def\article@logo{}
\def\@setcopyright{}
\def\@adminfootnotes{%
  \let\@makefnmark\relax
  \let\@thefnmark\relax
  \ifx\@empty\thankses\else \@footnotetext{%
    \def\par{\let\par\@par}\@setthanks}%
  \fi
}
\makeatother


\NeedsTeXFormat{LaTeX2e}

\newtheorem{thm}{Theorem}[section] 
\newtheorem{lem}[thm]{Lemma}     
\newtheorem{cor}[thm]{Corollary}
\newtheorem{prop}[thm]{Proposition}



\theoremstyle{definition}
\newtheorem{defn}[thm]{Definition}
\theoremstyle{remark}
\newtheorem{rem}[thm]{Remark}
\newtheorem{exa}[thm]{Example}
\numberwithin{equation}{subsection}


\newcommand{\DE}{\Delta}
\newcommand{\de}{\delta}

\newcommand{\te}{\vartheta}

\newcommand{\U}{\mathcal{U}}
\newcommand{\HH}{\mathcal{H}}

\newcommand{\R}{\mathbb{R}}

\newcommand{\N}{\mathbb{N}}

\newcommand{\FI}{\mathfrak{I}}

\newcommand{\FM}{\mathfrak{M}}

\newcommand{\FIG}{\mathfrak{IG}}

\newcommand{\hh}{{H}}

\newcommand{\CG}{\mathcal{G}}
\newcommand{\CGS}{\mathcal{GS}}


\begin{document}

\title[A finite Hausdorff dimension for graphs]
 {}

 \title[A finite Hausdorff dimension for graphs]
 {A finite Hausdorff dimension for graphs}

\author{ Juan M. Alonso}

\address{Dpto. de Matem\'atica, Universidad Nacional de San Luis, and Facultad de Ciencias Exactas y Naturales, UN de Cuyo, Mendoza, Argentina.}

\email{jmalonso@unsl.edu.ar}

\thanks{This work was completed with the support of an UNCuyo Research Scholarship (SeCTyP)}


\subjclass[2010]{Primary 05C99; Secondary 51F99, 52C99}

\keywords{Finite Hausdorff dimension, metric graphs, intrinsic (length) finite metric spaces}





\begin{abstract}The classical Hausdorff dimension of finite or countable metric spaces is zero. Recently, we defined a variant, called \emph{finite Hausdorff dimension}, which is not necessarily trivial on finite metric spaces. In this paper we apply this to connected simple graphs, a class that provides many interesting examples of finite metric spaces. There are two very different cases: one in which the distance is coarse (and one is doing Graph Theory), and another case in which the distance is much finer (and one is somewhere between graphs and finite metric spaces). 
\end{abstract}

\maketitle

\section{Introduction}

A \emph{finite Hausdorff dimension} for finite metric spaces, denoted $ \dim_{fH}$, was defined in \cite{alo1}. In contrast to the classical notion, finite Hausdorff dimension is not necessarily zero on finite sets. In this paper we apply this theory to (finite, connected, simple) graphs. There are two essentially different cases, depending on how the graph is metrised. In the simplest case, when the distance is given by hop count, we are doing Graph Theory. The general case is a medley between graphs and (finite) metric spaces. 

Both cases have applications. The general case is used to compute the finite dimension of plants and to show that, when considered as a function of time, it is intimately related to the ontogenesis of the plant (\cite{alo2}). The finite dimension of graphs metrised according to hop count can be used to study GlycomeDB (\cite{glyDB}), an open data base containing 40,000+ computed carbohydrate structures. We cluster the glycans according to finite dimension (there are only 110 different values for the dimension), and study the biological characteristics of the glycans in the clusters (work in progress).

In Section~\ref{s:review} we review the definition of the classical Hausdorff dimension and indicate the changes necessary to define finite Hausdorff dimension. In Section \ref{s:Mgraphs} we define metric structures on graphs, a theme to be continued in sections \ref{s:intrMET} and \ref{s:intrinsic}. Sections \ref{s:cat} to \ref{s:prod} deal with graphs endowed with the simplest possible metric (hop count), the one commonly used in Graph Theory (see, e.g. \cite{ham}). In Section \ref{s:cat} we define the finite Hausdorff dimension of graphs. In Section \ref{s:extremal} we calculate the extreme values taken by the finite dimension in the class of graphs with a fixed number of vertices. In Section \ref{s:prod} we deal with the finite dimension of products of graphs. 

We study general \emph{intrinsic metric graphs} in Section \ref{s:intrMET}, and show that every positive real number is the finite dimension of an intrinsic graph, while the values of the finite dimension of graphs with hop count is dense in the interval $[1,\infty)$. Finally, in Section \ref{s:intrinsic} we define \emph{intrinsic} (or \emph{length}) \emph{finite metric spaces}, and show that the finite metric spaces obtained from graphs are precisely the intrinsic finite metric spaces. 


\section{Review of finite dimension}\label{s:review}

 To make the paper more self-contained, we summarise here the definition and some properties of the finite Hausdorff, and \emph{finite Box-counting}, dimensions. For details we refer the reader to \cite{alo1}.  The dimensions are defined for arbitrary \emph{finite} metric spaces and, in contrast to their classical counterparts, are not trivial for these spaces. Note that "finite" in "finite dimension" refers to the fact that both dimensions are defined \emph{only} for finite spaces. The values taken by  the dimensions, on the other hand, can be any non-negative real number, or infinity.




Given a finite metric space $X$, henceforth simply referred to as a metric space, there are three parameters of interest to us: the smallest length among its points, denoted $\de$ or $\de(X)$, the diameter $\DE$ or $\DE(X)$ and the \emph{2-covering diameter} $\nabla$ or $\nabla(X)$. The first two are well-known; the third, to be defined presently, can be interpreted as the smallest real number $\nabla$ with the property that every point in the space has a (different) neighbour at distance at most $\nabla$. Notice that in general
\[
0<\de(X)\leq \nabla(X)\leq \DE(X).
\]

The definition of finite Hausdorff dimension follows closely the classical definition (see, e.g. \cite{fal}), except for a couple of crucial changes. Instead of just coverings we consider \emph{2-coverings}, defined as a family $\U=\{U_1,\dots,U_n\}$ of subsets that cover $X$ and have positive diameter. Having positive diameter is equivalent to the condition that each $U_i$ has at least two elements. The \emph{diameter} of $\U$ is the largest diameter of the $U_i$, and the \emph{2-covering diameter} $\nabla(X)$ is the smallest diameter of a 2-covering of $X$.

Given $\U$ and $s\geq 0$, let
\[
H^s_\U(X):=\sum_{i=1}^n \DE(U_i)^s.
\]
For $\eta\geq\nabla$,  set$H^s_\eta(X):=\min\{H^s_\U\,|\,  \DE(\U)\leq \eta\}$, and let $H^s(X):=H^s_\nabla(X)$. The function $H^s$ is an analog of Hausdorff's $s$-outer measure $\HH^s$. In the classical case, there is a value $s_0$ with the property that $\HH^s=0$ for $s>s_0$, and $\HH^s=\infty$ for $s<s_0$. The Hausdorff dimension is then defined to be $s_0 :=\dim_H $. There is no such value in the finite case, so we "manufacture" one by considering the equation
\begin{equation} \label{e:finHAU}
H^s(X)=\DE(X)^s,
\end{equation}
and solving for $s$. It turns out that this equation has a unique solution iff $\nabla(X)<\DE(X)$. The finite Hausdorff dimension of $X$, denoted $\dim_{fH}(X)$, is defined to be $0$ when $X$ is a point, $\infty$ when $\nabla(X)=\DE(X)$, and to be the unique solution of the above equation, a positive real number, when $\nabla(X)<\DE(X)$.

As in the classical case, $\dim_{fH}$ is not so easy to compute. We have also defined a finite Box-counting dimension, denoted $\dim_{fB}$. Just as in the classical case, $\dim_{fB}$ is easier to compute than $\dim_{fH}$ and, moreover, there is an explicit formula:
\begin{equation}\label{e:Box}
\dim_{fB}(X)=\frac{\ln N_{\nabla(X)}}{\ln\frac{\DE(X)}{\nabla(X)}}
\end{equation}
where $N_\nabla=N_{\nabla(X)}$ is the smallest number of elements of a 2-covering of $X$ of covering diameter $\nabla(X)$. A metric space $X$ is called \emph{locally uniform} (\cite{alo1}, Def. 5.4) when $\de(X)=\nabla(X)$. For these spaces $\dim_{fH}=\dim_{fB}$, and the dimension can be computed with the  explicit formula (\ref{e:Box}).

Both finite dimensions behave well with respect to \emph{H\"older equivalences}. Recall that a function $h:M\to M'$ is called $(r,\beta)$-H\"older equivalence if $d'(h(x),h(y))= r d(x,y)^\beta$, for all $x,y\in M$, and some fixed $r,\beta >0$. In the special case $\beta=1$, $h$ is called a \emph{similarity}, or $r$-similarity. When $\beta=1=r$, $h$ is an \emph{isometry}. For H\"older equivalences we have (\cite{alo1}, Thm 3.16):
\[
\beta \dim_{fH}(h(M)) =\dim_{fH}(M),
\]
and there is a similar formula for finite Box-dimension (\cite{alo1}, Thm. 4.12). Hence both finite dimensions are invariant under similarities and, in particular, under isometries.

\section{Intrinsic metrics in graphs} \label{s:Mgraphs}

To define the finite dimension of a graph we need to turn it into a metric space. In this section we examine a way to do this. 

Recall that graphs in this paper are \emph{finite}, \emph{simple} and \emph{connected}. A simple graph is the same as a $1$-dimensional simplicial complex. A graph $G=(V,E)$ consists of a set $V=V(G)$ of vertices and a set $E=E(G)$ of edges; every edge is a non-ordered pair of vertices. Vertices are \emph{adjacent} when they are endpoints of an edge. By a \emph{metric structure on a graph} $G$ we mean a metric structure on its set of vertices:

\begin{defn}
$G$ is a \emph{metric graph} if there is a distance $d$ defined on the set of vertices, i.e. if $(V(G),d)$ is a metric space. 
\end{defn}

Note that we exclude the edges from the metric structure, although it is possible, and not difficult, to metrise the whole graph.

\emph{Intrinsic metrics} are defined using the graph structure via paths and their lengths, as follows. A function $g:E(G)\to \R$ is called an \emph{edge-length} function if $g(e)>0$, for all $e\in E(G)$. A \emph{path} in $G$ joining $v_0$ to $v_k$ consists of a sequence of vertices $v_0,v_1,\dots,v_k$, such that $v_{i-1}$ and $v_i$ (for $i=1,\dots,k$) are adjacent joined, say, by edge $e_i$. We denote this path $p=e_1\dots e_n$. Given a pair $(G,g)$ we can associate a \emph{length} $\ell_g(p)$ to paths, and use $\ell_g$ to define $d_g$ on $V(G)$:

\[
\ell_g(p):=\sum_{i=1}^ng(e_i),\quad d_g(v_0,v_1):=\min\{\ell_g(p)\,|\, p\textrm{ is a path from }v_0\textrm{ to }v_1\}. 
\]

\begin{lem} \label{ }
Given $G=(V,E)$ and $g$ as above, $(V,d_g)$ is a metric space, and $d_g$ is called the \emph{intrinsic metric} defined by $g$.
\end{lem}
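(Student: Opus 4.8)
The plan is to verify the three metric axioms for $d_g$ directly, but the first order of business is to check that the defining minimum is actually attained, so that $d_g$ is a well-defined function $V\times V\to\R$. Since $G$ is finite and connected, for any pair $u,v\in V$ there is at least one path joining them, so the set whose minimum we take is nonempty. To see that the infimum is a genuine minimum I would argue that one may restrict attention to \emph{simple} paths: if a path repeats a vertex it contains a closed subwalk, and deleting that subwalk yields a path with the same endpoints but strictly smaller length (strictly, because $g>0$). Hence
\[
d_g(u,v)=\min\{\ell_g(p)\mid p\textrm{ a simple path from }u\textrm{ to }v\},
\]
a minimum over a finite set, so it is attained; in particular $d_g(u,v)\geq 0$ for all $u,v$.

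With $d_g$ well defined, the identity of indiscernibles is next. Adopting the natural convention that the trivial path (the one-vertex sequence, with empty edge list) has length $0$, we get $d_g(v,v)=0$. Conversely, if $u\neq v$ then every path joining them uses at least one edge, so its length is at least $\min_{e\in E}g(e)$, which exists and is positive because $E$ is finite and $g$ takes positive values; hence $d_g(u,v)>0$ whenever $u\neq v$. Symmetry follows from reversibility of paths: reversing $p=e_1\cdots e_k$ joining $u$ to $v$ produces a path from $v$ to $u$ traversing the same edges, and since each edge is an unordered pair, $\ell_g$ assigns it the same value; thus the two sets of path-lengths coincide and their minima agree, giving $d_g(u,v)=d_g(v,u)$.

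For the triangle inequality I would take minimising paths $p$ from $u$ to $v$ and $q$ from $v$ to $w$ and concatenate them into a single path from $u$ to $w$; its length is $\ell_g(p)+\ell_g(q)=d_g(u,v)+d_g(v,w)$, and since $d_g(u,w)$ is the minimum of $\ell_g$ over \emph{all} paths from $u$ to $w$, we obtain $d_g(u,w)\leq d_g(u,v)+d_g(v,w)$. The only genuinely non-routine step is the well-definedness of the minimum, which rests on the cycle-removal observation together with the finiteness of $G$; once that is in place, each of the three axioms is immediate, so this is where I would concentrate the writing.
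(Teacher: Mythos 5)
Your proof is correct, and it is the standard argument: the paper states this lemma without any proof at all, treating it as routine, so there is nothing to compare against. Your verification of the axioms, including the care taken over attainment of the minimum via reduction to simple paths, is exactly what a written-out proof would look like.
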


\begin{defn}
Suppose that $G$ is a metric graph with distance $d$, and $G\neq K_n$. We call $G$ an \emph{intrinsic metric graph} if $d=d_g$ for some edge-length function $g$. 
\end{defn}

We abuse language and simply say that $G$ is intrinsic, if it has an intrinsic metric (and $G\neq K_n$), i.e. if there is a metric $d_g$ on $V(G)$, for some $g$.

\begin{rem}
Recall that $K_n$, the \emph{complete graph} on $n$ vertices, is the graph in which every pair of vertices is adjacent. We exclude the case $G=K_n$ because \emph{any} finite metric space $(F,d)$ is isometric to $(V(K_n),d_g)$, i.e., to the "intrinsic" metric graph $K_n$ with metric $d_g$. Indeed, suppose that $F$ has $n$ elements, and define an edge-length function $g$ on $K_n$ by $g(\{v_0,v_1\}):=d(\{v_0,v_1\})$. Then $d_g=d$, and the spaces are isometric. \end{rem}

When $g$ is constant $d_g$ is rather coarse, as the following result shows.

\begin{lem}\label{l:isometry=iso}
Suppose given graphs $G, H$ with edge-length functions $g:E(G)\to \R$, $h:E(H)\to \R$. If $g,h$ are constant, say $g\equiv a$ and $h\equiv b$ for some $a,b>0$, then the following conditions are equivalent:
\begin{enumerate}
  \item There exists an $r$-similarity $\varphi:(V(G),d_g)\to (V(H),d_h)$, where $r=b/a$.
  \item There exists a graph isomorphism $\bar{\varphi}:G\to H$.
  \item There exists a simplicial isomorphism $\tilde{\varphi}:G\to H$.
\end{enumerate}
\end{lem}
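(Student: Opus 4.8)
The plan is to reduce all three conditions to statements about the \emph{hop-count} (graph) metric and then observe that, once the weights are constant, the metric data, the combinatorial data, and the simplicial data all encode the same information. First I would record the observation that drives the whole argument: when $g\equiv a$, the length of a path is $a$ times its number of edges, so the intrinsic metric satisfies $d_g=a\,d_0^G$, where $d_0^G$ denotes the hop-count distance (the least number of edges in a path). Likewise $d_h=b\,d_0^H$. In particular the smallest positive distance is $\de(V(G),d_g)=a$, and a pair of distinct vertices realises it precisely when it is an edge; since any two distinct non-adjacent vertices are at distance at least $2a>a$, we get the clean characterisation $u\sim_G v \iff d_g(u,v)=a$, and similarly in $H$. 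That the metric recovers the adjacency relation as exactly the set of pairs at minimal distance is the hinge of the proof.

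To prove (1)$\Leftrightarrow$(2) I would unwind the similarity condition for the specified ratio $r=b/a$. Because $d_g=a\,d_0^G$ and $d_h=b\,d_0^H$, a map $\varphi$ satisfies $d_h(\varphi u,\varphi v)=r\,d_g(u,v)$ for all $u,v$ if and only if $d_0^H(\varphi u,\varphi v)=d_0^G(u,v)$; that is, an $r$-similarity with $r=b/a$ is the same as a bijection of vertex sets that preserves hop-count distance exactly. Given such a $\varphi$, the adjacency characterisation yields, for all vertices $u,v$ of $G$, the chain $u\sim_G v \iff d_g(u,v)=a \iff d_h(\varphi u,\varphi v)=b \iff \varphi u\sim_H \varphi v$, so $\bar\varphi:=\varphi$ is a graph isomorphism, which is (2). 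Conversely, a graph isomorphism carries paths bijectively to paths, hence shortest paths to shortest paths, so it preserves hop-count distance and is therefore an $r$-similarity with $r=b/a$, which is (1). The equivalence (2)$\Leftrightarrow$(3) is then a matter of definitions: a simple graph is a $1$-dimensional simplicial complex whose $0$-simplices are its vertices and whose $1$-simplices are its edges, and a simplicial isomorphism is exactly a bijection on vertices carrying the edge relation bijectively in both directions, i.e. a graph isomorphism; I would simply remark on this identification.

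I expect no deep obstacle, since the content is the recognition that constant edge-lengths make $d_g$ a scalar multiple of the hop-count metric and that this metric determines the graph through its minimal-distance pairs. The one point requiring genuine care is the direction (1)$\Rightarrow$(2), and specifically the \emph{surjectivity} of $\varphi$: one must use that a similarity is a bijection (this being built into the notion of Hölder equivalence), for an injective distance-preserving map alone does not force a graph isomorphism. For instance, three consecutive vertices of a $6$-cycle reproduce the hop-count distances of a path on three vertices, so without bijectivity (1) would fail to imply (2). Granting bijectivity together with the adjacency-equals-minimal-distance observation, the implication is immediate; establishing that observation cleanly, including the inequality $2a>a$ for non-adjacent vertices, is the single step I would write out in full. (I would also note in passing that $r=b/a$ is forced, since any similarity must send the minimal distance $a$ to the minimal distance $b$.)
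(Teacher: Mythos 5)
Your proof is correct and follows essentially the same route as the paper: the key step in both is that, for constant edge-lengths, adjacency is characterised as the pair of vertices being at the minimal positive distance ($a$ in $G$, $b$ in $H$), so a $(b/a)$-similarity preserves adjacency and is therefore a graph isomorphism. You are somewhat more complete than the paper (which proves only (i)$\Rightarrow$(ii) and leaves the rest to the reader), and your remark that surjectivity of $\varphi$ must be built into the notion of similarity correctly identifies the one point the paper glosses over with ``It follows that $\varphi$ is bijective.''
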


\begin{proof}
(i) implies (ii). The hypothesis means that $\varphi$ maps the vertices of $G$ onto those of $H$, and $d_h(\varphi(v),\varphi(v'))=rd_g(v,v')$. It follows that $\varphi$ is bijective and vertices at distance $a$ are mapped to vertices at distance $b$. In other words, adjacent vertices are mapped to adjacent vertices, since adjacency in $G$ is equivalent to the vertices having distance $a$, and similarly for $H$. Extending $\varphi$ to $E(G)$ by $\bar{\varphi}(\{v,v'\}):=\{\varphi(v),\varphi(v')\}$ gives the desired graph isomorphism. We leave the proof of the rest of the assertions to the reader.
\end{proof}

\begin{cor}\label{c:invar}
In the special case when $a=b$, we have the following equivalent conditions:
\begin{enumerate}
  \item There exists an isometry $\varphi:(V(G),d_g)\to (V(H),d_h)$.
  \item There exists a graph isomorphism $\bar{\varphi}:G\to H$.
  \item There exists a simplicial isomorphism $\tilde{\varphi}:G\to H$.
\end{enumerate}
\end{cor}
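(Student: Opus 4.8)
The plan is to obtain this corollary as an immediate specialisation of Lemma~\ref{l:isometry=iso}. The only conceptual point is to reconcile the notion of $r$-similarity appearing in the lemma with the notion of isometry appearing in condition~(i) of the corollary. Recall from Section~\ref{s:review} that a function $h$ is an $r$-similarity when $d'(h(x),h(y))=r\,d(x,y)$ for all $x,y$, and that this becomes an isometry precisely when $r=1$. Hence my first step is to observe that the hypothesis $a=b$ forces $r=b/a=1$, so that the $r$-similarity in part~(i) of the lemma is, in this special case, nothing other than an isometry.

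With that identification in hand, the three conditions of the corollary are literally the three conditions of Lemma~\ref{l:isometry=iso} read with $r=1$: condition~(i) of the lemma (``there exists an $r$-similarity with $r=b/a$'') becomes ``there exists an isometry,'' while conditions~(ii) and~(iii) (graph isomorphism and simplicial isomorphism) are unchanged. I would therefore simply invoke the lemma, noting that its proof already establishes the cycle of implications among the three statements, and the substitution $r=1$ transports that equivalence verbatim to the isometric setting. There is no genuine obstacle here; the entire content of the corollary is the remark that an isometry is the $r=1$ instance of a similarity, which the lemma handles in full generality. If one wished to be fully self-contained, the only verification worth spelling out is that adjacency in $G$ corresponds to vertices at distance $a$ and adjacency in $H$ to vertices at distance $b=a$, so that an isometry preserves the (now common) edge-length and hence sends adjacent pairs to adjacent pairs and back, exactly as in the argument for the lemma.
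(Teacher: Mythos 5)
Your proof is correct and matches the paper's intent exactly: the corollary is stated without proof precisely because it is the $r=b/a=1$ specialisation of Lemma~\ref{l:isometry=iso}, and an $r$-similarity with $r=1$ is by definition an isometry. Nothing further is needed.
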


\begin{rem}
The lemma and its corollary show that the metric structure defined by constant edge-length maps is so coarse that similarity, isometry, graph isomorphism and simplicial isomorphism coincide. 
\end{rem}

We follow the usual definitions of Graph Theory and metrise the vertices with $d_g$ for $g$ constant with value 1. In this case $\de(G)=1=\nabla(G)$, so that graphs with this metric are locally uniform spaces. For later reference we record this fact:

\begin{cor}\label{c:locunif}
When $g\equiv 1$, $(G,d_g)$ is a locally uniform metric space, i.e.
$\delta(G)=\nabla(G)=1$.
\end{cor}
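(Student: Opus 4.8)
The plan is to compute the two parameters $\delta(G)$ and $\nabla(G)$ directly and show that both equal $1$, which is precisely the locally uniform condition. The key observation is that when $g\equiv 1$ the intrinsic metric $d_g$ is the usual hop count: $d_g(v,w)$ is the least number of edges in a path from $v$ to $w$, so every distance between two distinct vertices is a positive integer, and adjacent vertices lie at distance exactly $1$.

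First I would establish $\delta(G)=1$. Since $G$ is connected with at least two vertices it contains an edge, and the two endpoints of that edge are at distance exactly $1$. On the other hand, no two distinct vertices can lie at distance strictly less than $1$, because all distances are positive integers. Hence the minimum positive distance is $1$, i.e. $\delta(G)=1$.

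Next I would show $\nabla(G)\le 1$ by exhibiting a $2$-covering of covering diameter $1$. Using connectedness, every vertex $v$ has at least one adjacent vertex, so I can choose for each $v$ an edge $\{v,w_v\}$; the family $\U=\{\{v,w_v\}\mid v\in V(G)\}$ covers $V(G)$, and each member has exactly two elements and therefore diameter $1$, so $\DE(\U)=1$ and $\nabla(G)\le 1$. Combining this with the general inequality $\delta(G)\le\nabla(G)$ recorded before the statement, together with $\delta(G)=1$, yields $1=\delta(G)\le\nabla(G)\le 1$, whence $\nabla(G)=1$. Thus $\delta(G)=\nabla(G)=1$ and $(G,d_g)$ is locally uniform.

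The argument is essentially routine, and the closest thing to an obstacle is simply checking that the proposed family $\U$ really is a \emph{$2$-covering} in the sense of the paper: each block must have positive diameter (equivalently, at least two elements) and the blocks must cover all of $V(G)$. Both conditions are guaranteed by connectedness, which forces every vertex to have a neighbour, so the pairs $\{v,w_v\}$ are genuine edges and their union is all of $V(G)$.
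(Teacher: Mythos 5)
Your proof is correct and is exactly the routine verification the paper has in mind: the paper records this corollary without proof, treating it as immediate from the fact that $d_g$ with $g\equiv 1$ is hop count. Your computation of $\delta(G)=1$ via integrality of distances and of $\nabla(G)\leq 1$ via the covering by edges, combined with $\delta\leq\nabla$, is the intended argument (with the implicit standing assumption that $G$ has at least two vertices, which you correctly flag).
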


\begin{rem} \label{r:LUnotCONST}
Note that intrinsic graphs can be locally uniform \emph{whithout} $g$ being constant and equal 1. For instance, consider $G=P_4$, the path with four vertices and three edges $e_1,e_2,e_3$, with $g(e_1)=g(e_3):=1$, and $g(e_2):=x\geq 1$. With $d_g$, we have $\de(G)=1=\nabla(G)$, so that $G$ is locally uniform.
\end{rem}

We leave the study of the much finer distances obtained when $g$ is not constant to the last two sections.

\section{Finite Hausdorff dimension in Graph Theory} \label{s:cat}

\begin{defn} \label{d:finDIM}
The \emph{finite Hausdorff dimension of a metric graph $G$} is defined by
\[
\dim_{fH}(G):= \dim_{fH}(V(G),d).
\]
\end{defn}

In this section and in sections \ref{s:extremal} and \ref{s:prod}, we consider \emph{exclusively} the "default", graph-theoretical case, so that $\dim_{fH}(G):= \dim_{fH}(V(G),d_{g\equiv 1})$.

By Corollary \ref{c:invar},
\begin{cor}
When $(G,g\equiv 1)$, finite Hausdorff dimension is a graph invariant.
\end{cor}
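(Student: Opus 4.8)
The plan is to unwind the meaning of ``graph invariant'' and then chain together two facts already in hand. Recall that a quantity attached to graphs is a \emph{graph invariant} precisely when it takes equal values on any two isomorphic graphs. So I would begin by fixing graphs $G$ and $H$, both metrised with the constant edge-length function $g\equiv 1$ (so that in the notation of Corollary \ref{c:invar} we have $a=b=1$), and assuming that $G$ and $H$ are isomorphic as graphs. The goal is then simply to show $\dim_{fH}(G)=\dim_{fH}(H)$.

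First I would invoke Corollary \ref{c:invar}: since there is a graph isomorphism $\bar\varphi:G\to H$ (its condition (ii)), the equivalence stated there furnishes an isometry $\varphi:(V(G),d_g)\to(V(H),d_h)$ (its condition (i)). Next I would use the fact that finite Hausdorff dimension depends only on the isometry class of a metric space; this is the invariance under similarities recorded in Section \ref{s:review} (from \cite{alo1}, Thm.~3.16), specialised to $\beta=r=1$. It yields $\dim_{fH}(V(G),d_g)=\dim_{fH}(V(H),d_h)$. Finally, Definition \ref{d:finDIM} identifies these two quantities with $\dim_{fH}(G)$ and $\dim_{fH}(H)$ respectively, which gives the claim.

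The real content has already been absorbed into Lemma \ref{l:isometry=iso} and its Corollary \ref{c:invar}, which establish that for the hop-count metric a graph isomorphism is the same thing as an isometry of the associated vertex metric spaces. Given that, the present statement is a purely formal consequence, and I anticipate no genuine obstacle beyond marshalling these earlier results in the correct order; the only thing to be careful about is that the invariance statement from Section \ref{s:review} is phrased for similarities, so one must remember to take the isometry case $\beta=r=1$ rather than a general H\"older equivalence.
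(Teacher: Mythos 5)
Your proposal is correct and follows exactly the paper's route: the paper derives this corollary directly from Corollary~\ref{c:invar} (graph isomorphism $\Leftrightarrow$ isometry when $a=b$) combined with the isometry-invariance of $\dim_{fH}$ recorded in Section~\ref{s:review}. You have simply written out the chain of implications that the paper leaves implicit.
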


The case $g\equiv 1$ is very special from the point of view of finite Hausdorff dimension. By Corollary~\ref{c:locunif}, these graphs are locally uniform spaces and there is an explicit formula (\ref{e:Box}) for the dimension. In this situation, the finite Hausdorff dimension will be simply called \emph{finite dimension} and denoted $\dim_f(G)$.

\begin{cor}\label{c:formula}
When $(G,g\equiv 1)$, the finite dimension of $G$ is given by the formula:
\begin{equation}\label{e:formula}
\dim_f(G)=\frac{\ln N(G)}{\ln\DE(G)}
\end{equation}
where $N(G)$ is the minimal number of elements of 2-coverings of diameter $\nabla=1$ of the graph.
\end{cor}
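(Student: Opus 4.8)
The plan is to read this corollary as nothing more than a specialisation of the Box-counting formula (\ref{e:Box}) to the locally uniform case. First I would invoke Corollary~\ref{c:locunif}, which guarantees that when $g\equiv 1$ the space $(V(G),d_g)$ is locally uniform, with $\de(G)=\nabla(G)=1$. As recalled in Section~\ref{s:review}, local uniformity has exactly the two consequences needed: the two finite dimensions coincide, $\dim_{fH}=\dim_{fB}$, so that $\dim_f(G)=\dim_{fB}(G)$; and $\dim_{fB}$ is computed by the explicit formula (\ref{e:Box}).

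The single substantive step is then to substitute $\nabla(G)=1$ into (\ref{e:Box}). The denominator becomes $\ln\bigl(\DE(G)/\nabla(G)\bigr)=\ln\DE(G)$, and the numerator $\ln N_{\nabla(G)}$ becomes $\ln N(G)$, once one checks that the quantity $N(G)$ of the statement --- the minimal size of a $2$-covering of covering diameter $\nabla=1$ --- is precisely the $N_{\nabla(X)}$ appearing in (\ref{e:Box}) applied to $X=(V(G),d_g)$. This produces (\ref{e:formula}) verbatim.

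I do not anticipate a genuine obstacle here, since every ingredient is furnished by results quoted above; the proof is essentially a one-line computation. The only matter deserving a moment's attention is well-definedness: both equation (\ref{e:finHAU}) and formula (\ref{e:Box}) presuppose $\nabla(G)<\DE(G)$, which here reads $\DE(G)>1$, i.e. $G$ possesses a pair of non-adjacent vertices --- equivalently, $G$ is not complete. For $G=K_n$ one has $\nabla=\DE=1$ and $\dim_f=\infty$ by the definition of finite dimension, consistent with the degenerate value $\ln N(G)/\ln 1$ of (\ref{e:formula}); I would therefore understand the formula under the tacit hypothesis $\DE(G)>1$ and settle the complete-graph case directly from the definition.
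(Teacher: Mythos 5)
Your proposal is correct and follows the same route the paper takes: Corollary~\ref{c:locunif} gives local uniformity, so $\dim_{fH}=\dim_{fB}$ and formula (\ref{e:Box}) applies, and substituting $\nabla(G)=1$ yields (\ref{e:formula}). Your explicit handling of the degenerate case $G=K_n$ (where $\DE=1$ and the formula reads $\ln N/\ln 1$) is a sensible precaution the paper leaves implicit, but it does not change the argument.
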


\noindent It turns out that $N(G)$ coincides with the well-known graph invariant \emph{(vertex) clique covering number} $\vartheta(G)$, as the next result shows. By Karp's classical work on complexity \cite{karp}, clique cover is NP-complete. Hence, computing $\dim_f$ is also NP-complete.


\begin{lem}
For any graph $G$, 
$
N(G)= \vartheta(G), 
$
where $\vartheta(G)$ denotes the \emph{(vertex) clique covering number} of $G$.
\end{lem}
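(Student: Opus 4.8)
The plan is to translate the metric condition defining $N(G)$ into a purely graph-theoretic one and then match it against the definition of $\vartheta(G)$. Since $g\equiv 1$, the metric $d_{g\equiv 1}$ is the hop-count distance, so $d(u,v)=1$ exactly when $u$ and $v$ are adjacent and $d(u,v)\ge 2$ otherwise. Consequently any subset $U\subseteq V(G)$ with at least two elements has $\DE(U)\ge 1$, with equality if and only if every pair of distinct points of $U$ lies at distance $1$, i.e. is adjacent. In other words, a subset of diameter $1$ is precisely a clique of $G$ containing at least two vertices. Hence a $2$-covering of covering diameter $\nabla=1$ is exactly a family of cliques, each with at least two vertices, whose union is $V(G)$, and $N(G)$ is the least number of such cliques needed to cover $V(G)$.

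With this reformulation one inequality is immediate: a family of cliques of size $\ge 2$ covering $V(G)$ is in particular a clique cover, so $\vartheta(G)\le N(G)$. For the reverse inequality I would start from a minimum clique cover realising $\vartheta(G)$. The only obstruction to reading it as a $2$-covering of diameter $1$ is that the usual clique covering number permits singleton cliques $\{v\}$, which have diameter $0$ and are therefore not admissible as members of a $2$-covering. Here connectivity saves the day: since $G$ is connected with at least two vertices, every vertex $v$ has a neighbour $w$, so I can replace each singleton $\{v\}$ in the cover by the edge $\{v,w\}$. The enlarged family still covers $V(G)$ (the sets were only made larger), now consists exclusively of cliques with at least two vertices, and has the same number of members. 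This exhibits a $2$-covering of diameter $1$ with $\vartheta(G)$ elements, whence $N(G)\le\vartheta(G)$, and the two inequalities combine to give $N(G)=\vartheta(G)$.

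The heart of the matter, and the only place any care is needed, is the singleton issue just described: the definition of a $2$-covering forbids one-element sets, while the standard clique covering number allows one-element cliques, so a priori the two counts could differ. The argument above shows they do not, and connectivity is precisely the hypothesis that makes the replacement of singletons by edges possible without changing the count. (The degenerate single-vertex graph, a point of dimension $0$, is excluded from this discussion.)
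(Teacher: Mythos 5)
Your proof is correct and follows essentially the same route as the paper's: identify diameter-$1$ sets with cliques of size at least two, get $\vartheta(G)\le N(G)$ immediately, and obtain the reverse inequality by enlarging each singleton clique in a minimum clique cover to an edge. You are somewhat more explicit than the paper about connectivity being the hypothesis that licenses the singleton replacement, which is a welcome clarification but not a different argument.
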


\begin{proof}
Recall an $m$-clique is a set $W\subseteq V(G)$ of $m$ vertices that generates a complete graph $K_m$. The general definition of finite Hausdorff dimension requires to cover the vertices by 2-coverings. Translated to our situation, an element of a 2-covering is precisely a set of diameter 1. Observe that $K_m$ has diameter 1 iff $m\geq 2$. Thus, to compute $\vartheta(G)$ one covers with $m$-cliques for $m\geq 1$, whereas to compute $N(G)$, we use only cliques with $m\geq 2$. It is clear by definition that $\te(G)\leq N(G)$. On the other hand, given an arbitrary clique covering with $\te(G)$ elements, we can add a new vertex to each clique consisting of a single vertex, and convert the covering into a 2-covering of diameter one. Thus $N(G)\leq \te(G)$. The proof is complete.
\end{proof}

We can reformulate Corollary~\ref{c:formula} to express the result in terms of purely graph theoretic concepts:

\begin{cor}\label{c:formula2}
The finite dimension of a graph $G$ is given by the formula:
\begin{equation}
\dim_{f}(G)=\frac{\ln \vartheta(G)}{\ln \DE(G)}
\end{equation}
where $\te(G)$ denotes the graph's clique covering number, and $\DE(G)$ the diameter of the graph.
\end{cor}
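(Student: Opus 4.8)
The plan is to obtain this formula by directly combining the two results that immediately precede it, so that the proof reduces to a single substitution. First I would invoke Corollary~\ref{c:formula}. Since throughout this section we are in the default case $g\equiv 1$, Corollary~\ref{c:locunif} guarantees that $(G,d_g)$ is a locally uniform space with $\de(G)=\nabla(G)=1$; hence the finite Hausdorff dimension agrees with the finite Box-counting dimension and is computed by the explicit formula
\[
\dim_f(G)=\frac{\ln N(G)}{\ln\DE(G)},
\]
where $N(G)=N_{\nabla(G)}$ is the minimal cardinality of a $2$-covering of covering diameter $\nabla(G)=1$.

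Next I would apply the preceding Lemma, which translates this purely metric count into a classical graph invariant, namely $N(G)=\vartheta(G)$, the clique covering number. Substituting $N(G)=\vartheta(G)$ into the displayed formula yields exactly
\[
\dim_{f}(G)=\frac{\ln\vartheta(G)}{\ln\DE(G)},
\]
which is the asserted identity. Observe that the denominator is unchanged: $\DE(G)$ already denotes the diameter of the metric graph with $g\equiv 1$, i.e.\ the usual graph diameter (hop count), so no further reinterpretation is needed there.

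There is no genuine obstacle here, since both ingredients have already been established; the statement is a reformulation rather than a new theorem. The only point I would be careful to verify is that the hypotheses of Corollary~\ref{c:formula} are actually in force, i.e.\ that we are indeed working with $g\equiv 1$ so that $\nabla(G)=1$ and the space is locally uniform. This is exactly what makes $N(G)$ count cliques of size $\geq 2$ (elements of positive diameter) rather than arbitrary cliques, which in turn is what the preceding Lemma reconciles with the standard $\vartheta(G)$ by padding singleton cliques. Granting that check, the corollary is immediate, and it expresses $\dim_f$ entirely in terms of the graph-theoretic invariants $\vartheta(G)$ and $\DE(G)$, as desired.
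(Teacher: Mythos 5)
Your proof is correct and is exactly the paper's route: the corollary is obtained by substituting the lemma's identity $N(G)=\vartheta(G)$ into the formula of Corollary~\ref{c:formula}, with the local-uniformity check for $g\equiv 1$ supplied by Corollary~\ref{c:locunif}. Nothing further is needed.
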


\section{Extremal graphs} \label{s:extremal}

In this section we consider the extremal  values that $\dim_f$ can take in the set of graphs with $n$ vertices. 

Note first that $V$ has a \emph{focal point} (see \cite{alo1}) iff $\DE(V)=1$, that is, iff $V$ is a complete graph $K_n$. Consequently, $\dim_f(G)=\infty$ iff it is a complete graph (with at least one edge). The graph with only one vertex and no edges is the only graph with $\dim_f(G)=0$. For all other graphs, the dimension is a positive real number (see \cite{alo1} for details).

The question is trivial for $n<4$: for $n=1$ the only graph with one vertex consists of a point and has dimension 0. For $n=2$ there is only $K_2$, and its dimension is infinity. The graphs with $n=3$ are $K_3$ and $P_3$, with dimensions infinity and 1, respectively.

Let $p,q\geq 2$ be integers and $K_p,K_q$ the corresponding complete graphs on $p$ [resp. $q$] vertices. Let $A\subseteq V(K_p)$ denote a non-empty, \emph{proper} subset of the vertices of $K_p$, and similarly for $B\subseteq V(K_q)$. A graph in $L_{p,q}$ consists of the disjoint union of $K_p$ and $K_q$, together with edges $e_1,\dots,e_r$, $r\geq 1$, such that one end-point of each $e_i$ belongs to $A$, and the other to $B$. Given a graph $G$ in $L_{p,q}$, we usually abuse language and write directly $L_{p,q}$, instead of $G$. Clearly, any graph in $L_{p,q}$ has $p+q$ vertices. The only  graph in $L_{2,2}$ is the path $P_4$. However, for $p+q\geq 5$, the class $L_{p,q}$ has more than one graph. The next result characterises the set of graphs $L_{p,q}$ as those having finite dimension $\ln 2/\ln 3$. 

\begin{prop}\label{p:L-pq}
Let $G$ be a graph in $L_{p,q}$, for some $p,q\geq 2$. Then
\begin{enumerate}
  \item $G$ is connected.
  \item $\DE(G)=3$.
  \item $\dim_{f}(G)=\ln 2/\ln 3$.
  \item A graph $H$ with $p+q$ vertices is in $L_{p,q}$ iff $N(H)=2$ and $\DE(H)=3$.
\end{enumerate}
\end{prop}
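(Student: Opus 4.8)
The plan is to take the four claims in order, using the explicit formula of Corollary~\ref{c:formula2} as the engine for part~(iii) and then reducing part~(iv) to the structural facts established along the way. Throughout I write $C_1=V(K_p)$ and $C_2=V(K_q)$ for the two vertex-cliques and $A\subseteq C_1$, $B\subseteq C_2$ for the attachment sets, which are nonempty and proper by hypothesis. Claim~(i) is immediate: $K_p$ and $K_q$ are complete, hence connected, and since $r\ge 1$ there is a bridging edge $e_1$, so any vertex of $C_1$ reaches its $A$-endpoint, crosses $e_1$ into $B$, and then reaches any vertex of $C_2$. For claim~(ii) I would prove $\DE(G)\le 3$ and then exhibit a pair at distance exactly $3$. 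The upper bound comes from the generic path $u\to a\to b\to v$ with $a\in A$, $b\in B$: any $u\in C_1$ reaches some $a\in A$ in at most one step, crosses to $b\in B$, and then reaches any $v\in C_2$ in at most one step, while two vertices inside the same clique are at distance $1$. For the lower bound I pick $u\in C_1\setminus A$ and $v\in C_2\setminus B$ (nonempty because $A,B$ are proper); since the only cross-edges join $A$ to $B$, such $u$ and $v$ are non-adjacent and share no common neighbour, so $d_g(u,v)=3$.

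Part~(iii) is then a one-line consequence of Corollary~\ref{c:formula2}: with $\DE(G)=3$ it only remains to check $\vartheta(G)=N(G)=2$. The two cliques $C_1,C_2$ already cover $V(G)$, so $\vartheta(G)\le 2$; and $\vartheta(G)=1$ would force $G=K_{p+q}$, contradicting the distance-$3$ pair found in (ii). Hence $\vartheta(G)=2$ and the formula yields $\dim_f(G)=\ln 2/\ln 3$.

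For claim~(iv) the forward implication is just (ii) combined with the identity $N(G)=\vartheta(G)=2$ coming from (iii). The converse is the substantive direction. Given a connected graph $H$ on $p+q$ vertices with $N(H)=2$ and $\DE(H)=3$, I fix a clique cover $V(H)=C_1\cup C_2$ of size two and argue that it is forced into the shape of an $L_{p,q}$-graph. First the cover must be disjoint: a common vertex $w\in C_1\cap C_2$ would be adjacent to every other vertex, giving $\DE(H)\le 2$, a contradiction. Next each $|C_i|\ge 2$, since a singleton clique $C_i=\{w\}$ would again place $w$ within distance $2$ of everything and collapse the diameter. Connectedness supplies at least one cross-edge, so $r\ge 1$; and writing $A,B$ for the attachment sets I run the diameter computation of (ii) backwards: if $A=C_1$ (or $B=C_2$) then every vertex of $C_1$ reaches $C_2$ in one step, capping $\DE(H)$ at $2$, so $\DE(H)=3$ forces $A\subsetneq C_1$ and $B\subsetneq C_2$. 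Thus $H$ is precisely two disjoint cliques joined by a nonempty proper partial join, i.e.\ a graph of the $L$-family.

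The main obstacle is exactly this converse: recovering the rigid two-clique-plus-proper-join structure from the two numerical invariants alone. The two delicate points are that the size-$2$ clique cover is automatically \emph{disjoint} and that the attachment sets are automatically \emph{proper}, both obtained by showing that any failure would shrink the diameter to $2$. One bookkeeping caveat I would flag is that the clique cover only pins down the sizes $|C_1|,|C_2|$ with $|C_1|+|C_2|=p+q$, not the ordered pair $\{p,q\}$ itself; the equivalence in (iv) is therefore to be read as membership in the $L$-family with the clique sizes so recovered.
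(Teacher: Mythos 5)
Your proposal is correct and follows essentially the same route as the paper: upper bound $\DE\le 3$ via the $u\to a\to b\to v$ path, lower bound from a pair in $V(K_p)\setminus A$ and $V(K_q)\setminus B$, the two-clique cover giving $N(G)=2$, and for the converse in (iv) forcing disjointness and properness of the attachment sets by showing any failure collapses the diameter to at most $2$. The only differences are cosmetic — you make explicit why $N\ne 1$ and why singleton cliques cannot occur (which the paper gets for free from the definition of a 2-covering), and your closing caveat about recovering $|C_1|+|C_2|=p+q$ rather than the pair $\{p,q\}$ matches how the paper itself concludes ($p=|U_1|$, $q=|U_2|$).
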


\begin{proof}
Both $K_p$ and $K_q$ have diameter 1 because $p,q\geq 2$. Hence any vertex in $K_p$ is adjacent to all vertices of $A$, and at least one of these is adjacent to a vertex of $B$ which is, in its turn, adjacent to all other vertices of $K_q$. It follows that $G$ is connected and, moreover, has diameter $\leq 3$. 

We now claim that if $v\in V(K_p)\setminus A$ and $w\in V(K_q)\setminus B$, then $d(v,w)=3$. This will prove (ii). Any path from $v$ to $w$ must contain one of the $e_i$. Since the end-points of $e_i$ lie one in $A$ and the other in $B$, we need at least two more edges, as desired. 

To see (iii) we need only show that $N(G)=2$, since we already know (ii). Clearly, $\{V(K_p),V(K_q)\}$ is a minimal 2-cover for $G$.

To prove (iv) observe that the condition is necessary, by (ii) and (iii). To see sufficiency, suppose that $H$ is a graph with $N(H)=2$ and $\DE(G)=3$. Let $\{U_1,U_2\}$ be a 2-cover of $H$ by cliques. Thus, $|U_1|,|U_2|\geq 2$, and $\DE(U_1)=1=\DE(U_2)$. If $U_1\cap U_2\neq \emptyset$, then $\DE(H)\leq 2$. Hence $U_1\cap U_2=\emptyset$. Note that $U_j$ ($j=1,2$) generates $K_{|U_j|}$ in $H$, i.e. $H$ contains disjoint copies of the complete graphs $K_{|U_j|}$. Since $H$ is connected, there is at least one edge joining a vertex of $U_1$ to a vertex of $U_2$. Hence, if $A\subset U_1$ is the set of vertices adjacent to a vertex of $U_2$ and, similarly, $B\subset U_2$ consists of the vertices adjacent to a vertex of $U_1$, it follows that neither $A$ nor $B$ is empty. Moreover, both must be proper subsets since otherwise $\DE(H)<3$. It follows that $H\in L_{p,q}$, for $p=|U_1|,\,q=|U_2|$, as desired. This completes the proof.
\end{proof}

\begin{thm}\label{t:extGRAPHS}
For any graph $G$ with $n\geq 4$ vertices, 
\begin{equation} \label{e:withK}
\frac{\ln 2}{\ln 3}\leq \dim_f(G)\leq \infty .
\end{equation}
If, moreover, $G\neq K_n$, then
\begin{equation}\label{e:finiteONLY}
\frac{\ln 2}{\ln 3}\leq \dim_f(G)\leq \frac{\ln (n-1)}{\ln 2} \cdot
\end{equation}
We have: $\dim_f (K_n)=\infty$, $\dim_f(L_{p,q})=\ln 2/\ln 3$, for any $p+q=n$, and $\dim_\hh(ST_n)=\ln (n-1)/\ln 2$.
\end{thm}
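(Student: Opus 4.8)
The plan is to reduce everything to the closed formula $\dim_f(G)=\ln\vartheta(G)/\ln\DE(G)$ of Corollary~\ref{c:formula2}. The complete graphs are disposed of first: $K_n$ has diameter $1$, so $\dim_f(K_n)=\infty$, which establishes both the right-hand bound in \eqref{e:withK} and the first of the three equalities. For every $G\neq K_n$ we have $\DE(G)\ge 2$ and $\vartheta(G)\ge 2$ (the latter since $\vartheta(G)=1$ would force $G=K_n$), so numerator and denominator of the formula are both positive and the whole argument reduces to bounding $\ln\vartheta(G)$ and $\ln\DE(G)$ separately.

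For the lower bound I would first prove the structural inequality
\[
\vartheta(G)\ \ge\ \Big\lceil\tfrac{\DE(G)+1}{2}\Big\rceil .
\]
Pick $u,v$ with $d(u,v)=\DE(G)=:D$ and a geodesic $u=w_0,w_1,\dots,w_D=v$; along a geodesic $d(w_i,w_j)=|i-j|$, so the even-indexed vertices $w_0,w_2,w_4,\dots$ are pairwise at distance $\ge 2$, hence pairwise non-adjacent. No clique can contain two of them, so any clique cover uses at least $\lfloor D/2\rfloor+1=\lceil(D+1)/2\rceil$ cliques. Substituting into the formula gives $\dim_f(G)\ge \ln\lceil(D+1)/2\rceil/\ln D$, and it remains to check that this quantity is $\ge\ln2/\ln3$ for every integer $D\ge2$, with the minimum attained at $D=3$. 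I would verify this by writing $D=2k$ or $D=2k+1$ and showing $g(k):=\ln3\,\ln(k+1)-\ln2\,\ln(2k+1)\ge 0$ for $k\ge1$: one has $g(1)=0$, while a short computation shows $g'(k)>0$ for all $k\ge1$, so $g$ is nonnegative and vanishes only at $k=1$ (i.e. $D=3$); the even case follows at once from $\ln(2k)<\ln(2k+1)$.

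The upper bound for $G\neq K_n$ is easier. Since $G$ is connected it contains an edge $\{a,b\}$; covering $\{a,b\}$ by the single clique it spans and each of the remaining $n-2$ vertices by a singleton yields a clique cover with at most $n-1$ members, so $\vartheta(G)\le n-1$. Combined with $\DE(G)\ge 2$, and because numerator and denominator are positive, $\dim_f(G)=\ln\vartheta(G)/\ln\DE(G)\le \ln(n-1)/\ln2$, which is \eqref{e:finiteONLY}.

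Finally the remaining two equalities, which also show both bounds are sharp. The value $\dim_f(L_{p,q})=\ln2/\ln3$ is exactly Proposition~\ref{p:L-pq}(iii), and it realizes the lower bound (here $D=3$, $\vartheta=2$). For the star $ST_n$, its $n-1$ leaves are pairwise non-adjacent, forcing $\vartheta(ST_n)\ge n-1$, while the $n-1$ edges through the centre form a clique cover, so $\vartheta(ST_n)=n-1$; since $\DE(ST_n)=2$ the formula yields $\dim_f(ST_n)=\ln(n-1)/\ln2$, realizing the upper bound. I expect the only genuine work to lie in the lower bound: the clique-cover estimate via geodesics is routine, but confirming that $\ln\lceil(D+1)/2\rceil/\ln D$ is minimized \emph{precisely} at $D=3$ over all integer diameters is the single place where a real estimate (the monotonicity of $g$) is required.
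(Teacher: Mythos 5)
Your proof is correct and follows essentially the same route as the paper: the closed formula of Corollary~\ref{c:formula2} together with $\vartheta(G)\le n-1$, $\DE(G)\ge 2$, and the relation $\DE\le 2\vartheta-1$ (your geodesic inequality $\vartheta(G)\ge\lceil(\DE+1)/2\rceil$), with the extremal graphs $K_n$, $L_{p,q}$ and $ST_n$ handled exactly as in the text. If anything, your treatment of the lower bound is more careful than the paper's: you prove the clique-cover/diameter inequality rather than assert it, and you minimize $\ln\lceil(D+1)/2\rceil/\ln D$ keeping the numerator intact, whereas the paper first replaces $\ln N$ by $\ln 2$ and then minimizes $\ln 2/\ln(2N-1)$ over $N=2,\dots,n-1$, a quantity whose minimum is attained at $N=n-1$ and is not $\ln 2/\ln 3$, so your version repairs that step.
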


\begin{proof}
Note first that $N=N(G)\leq n-1$ (see \cite{alo1} Remark 2.10, or give a direct proof). If, moreover, $G\neq K_n$, then $N\geq 2$. By Corollary (\ref{c:formula}):
\begin{equation}
\label{e:bound1}
\frac{\ln 2}{\ln \DE}\leq \dim_f(G)\leq \frac{\ln (n-1)}{\ln \DE}\cdot
\end{equation}
Since $G\neq K_n$, we have $\DE\geq 2$, proving the upper bound of (\ref{e:finiteONLY}). To prove the lower bound, note that for a given $N\geq 2$, we have $\DE\leq 2N-1 $, 
so that 
\[
\min\{\ln 2/\ln \DE\,|\, 2\leq \DE\leq 2N-1 \}=\ln2/\ln(2N-1)
\]
and, hence, we need to compute $
\min \{{\ln 2}/{\ln (2N-1)}\, | \, N=2,3,\dots,n-1\},
$
which equals $\ln 2/\ln 3$. Finally, it is obvious that the extremal values correspond to the dimensions of $K_n$, $L_{p,q}$ and $ST_n$, respectively. This completes the proof.
\end{proof}

\begin{lem}\label{p:paths}
Let $T$ be a tree with $n\geq 4$ vertices. Then the following conditions are equivalent:
\begin{enumerate}
  \item $T=P_n$, the path on $n$ vertices.
  \item $\DE(T)=n-1$.
  \item $\dim_{f}(T)=\frac{\ln \lceil n/2\rceil}{\ln (n-1)}$.
\end{enumerate}
\end{lem}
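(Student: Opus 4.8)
The plan is to prove the cycle (i)$\Leftrightarrow$(ii), (i)$\Rightarrow$(iii), (iii)$\Rightarrow$(ii), which delivers the full equivalence. Everything rests on one structural remark: a tree contains no triangles, so its only cliques are single vertices and single edges. Consequently a minimum (vertex) clique cover may be taken to be a \emph{partition} of $V(T)$ into edges and singletons; the edges of such a partition are pairwise disjoint and hence form a matching, so minimising the number of parts amounts to maximising that matching. This gives $\vartheta(T)=n-\nu(T)$, where $\nu(T)$ is the maximum matching number, and since $\nu(T)\leq\lfloor n/2\rfloor$ we obtain the decisive bound $\vartheta(T)\geq\lceil n/2\rceil$, valid for every tree on $n$ vertices. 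I will also use the elementary facts that $\DE(T)$ is the length of a longest geodesic and that $\DE(T)\leq n-1$.

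For (i)$\Leftrightarrow$(ii): the path $P_n$ plainly has diameter $n-1$, giving (i)$\Rightarrow$(ii). Conversely, if $\DE(T)=n-1$ then some geodesic has $n-1$ edges, and hence $n$ vertices; as $T$ has exactly $n$ vertices and exactly $n-1$ edges, this geodesic uses up all of $V(T)$ and $E(T)$, so $T=P_n$. For (i)$\Rightarrow$(iii): since $\nu(P_n)=\lfloor n/2\rfloor$, the structural remark gives $\vartheta(P_n)=\lceil n/2\rceil$, and combining this with $\DE(P_n)=n-1$ in the formula of Corollary~\ref{c:formula2} yields $\dim_f(P_n)=\ln\lceil n/2\rceil/\ln(n-1)$.

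The crux is (iii)$\Rightarrow$(ii), where the idea is that among all trees on $n\geq 4$ vertices the path has the smallest finite dimension. Feeding $\vartheta(T)\geq\lceil n/2\rceil$ into the numerator and $2\leq\DE(T)\leq n-1$ into the denominator of Corollary~\ref{c:formula2}, I get
\[
\dim_f(T)=\frac{\ln\vartheta(T)}{\ln\DE(T)}\geq\frac{\ln\lceil n/2\rceil}{\ln\DE(T)}\geq\frac{\ln\lceil n/2\rceil}{\ln(n-1)}.
\]
If (iii) holds, the two outer terms coincide, so both inequalities are equalities; the second one is an equality exactly when $\DE(T)=n-1$, which is (ii), after which (ii)$\Rightarrow$(i) closes the loop. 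The step I expect to need the most care is the identity $\vartheta(T)=n-\nu(T)$ — in particular the reduction to clique \emph{partitions}, so that the chosen edges genuinely form a matching and the lower bound $n-\nu(T)$ is not merely an upper bound from the construction — together with the bookkeeping in the equality analysis, whose whole force is that the numerator and denominator bounds are independent, so equality of the extreme terms forces $\DE(T)=n-1$ on its own.
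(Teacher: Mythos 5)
Your proof is correct and follows essentially the same route as the paper: the decisive ingredient in both is that triangle-freeness forces $\vartheta(T)=N(T)\geq\lceil n/2\rceil$, which together with $\DE(T)\leq n-1$ and the formula of Corollary~\ref{c:formula2} yields (iii)$\Rightarrow$(ii). You derive that bound via the matching identity $\vartheta(T)=n-\nu(T)$ and run a direct equality-case analysis where the paper counts $\sum|U_i|=2N$ and argues by contradiction, but these are cosmetic differences; you also supply the remaining implications that the paper leaves to the reader.
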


\begin{proof}
We prove that (iii) implies (ii) and leave the other implications to the reader. Assume for contradiction that $\DE\leq n-2$. Let $\U=\{U_i\}_{i=1}^N$ denote any 2-covering of minimal cardinality $N=N(T)$. Now, $T$ has no triangles, so from $V(T)\subseteq \cup U_i$, we obtain:
\[
n=|V(T)|\leq \sum_1^N |U_i|=2N,
\]
i.e. $n\leq 2N$, or $N \geq \lceil n/2\rceil$. Now, when $n=2k$, the inequality says $k\leq N$, and when $n=2k+1$, we have $k\leq N-1$. From
\[
\frac{\ln \lceil n/2\rceil}{\ln (n-1)} = \frac{\ln N }{\ln \DE},\quad \text{ we get }\quad  N<\lceil n/2 \rceil.
\]
For $n=2k$, we have $k\leq N < k=\lceil n/2 \rceil$, and when $n=2k+1$, $k\leq N-1 < k$. In both cases we obtain a contradiction, as desired.
\end{proof}

\begin{thm}\label{t:extTREES}
Let $T$ be a tree with $n\geq 4$ vertices. Then 
\[
\frac{\ln \lceil n/2\rceil}{\ln (n-1)}\leq \dim_f(T)\leq \frac{\ln (n-1)}{\ln 2} .
\]
Moreover, the lower bound is the finite dimension of the path $P_n$, and the upper bound the dimension of $ST_n$. 
\end{thm}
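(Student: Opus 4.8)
The plan is to reduce everything to the formula $\dim_f(T)=\ln N(T)/\ln\DE(T)$ of Corollary~\ref{c:formula} and then bound the two invariants $N(T)$ and $\DE(T)$ separately, exploiting two structural facts about a tree $T$ on $n\geq 4$ vertices: it is triangle-free, and it is never a complete graph (so $T\neq K_n$). The inequalities in the statement will then fall out by elementary monotonicity of the ratio $\ln N/\ln\DE$.

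First I would record four bounds. For the diameter, a tree on $n$ vertices has $\DE(T)\leq n-1$, while $T\neq K_n$ forces $\DE(T)\geq 2$. For the clique-covering number, the triangle-free estimate already used in the proof of Lemma~\ref{p:paths} gives $n=|V(T)|\leq\sum_i|U_i|=2N(T)$, whence $N(T)\geq\lceil n/2\rceil$, and the general bound $N(G)\leq n-1$ for $G\neq K_n$ (\cite{alo1}, Rem.~2.10) gives $N(T)\leq n-1$. Thus $\lceil n/2\rceil\leq N(T)\leq n-1$ and $2\leq\DE(T)\leq n-1$.

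Next I would combine these, keeping track of the monotonicity of $\ln N/\ln\DE$: it is increasing in the numerator, and because $\DE\geq 2$ makes $\ln\DE>0$ a genuine positive quantity, it is decreasing in the denominator. For the upper bound I feed in the largest admissible numerator $N\leq n-1$ together with the smallest denominator $\DE\geq 2$, obtaining $\dim_f(T)\leq\ln(n-1)/\ln 2$. For the lower bound I feed in the smallest numerator $N\geq\lceil n/2\rceil$ together with the largest denominator $\DE\leq n-1$, obtaining $\dim_f(T)\geq\ln\lceil n/2\rceil/\ln(n-1)$.

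Finally I would verify that both bounds are attained, so they are best possible. The path realises the lower bound: this is exactly Lemma~\ref{p:paths}(iii). For the star $ST_n$ I would compute the invariants directly: its only sets of diameter $1$ are its $n-1$ edges, and each edge is the unique diameter-$1$ set covering its leaf, so every minimal $2$-covering must use all of them, giving $N(ST_n)=n-1$; since also $\DE(ST_n)=2$, the formula yields $\dim_f(ST_n)=\ln(n-1)/\ln 2$. There is no deep obstacle here; the only point requiring care is the direction of the inequalities when dividing logarithms, which is safe precisely because $\DE\geq 2$ keeps $\ln\DE$ positive throughout. Conceptually the proof is just an optimisation of $\ln N/\ln\DE$ over the admissible rectangle of $(N,\DE)$ values, whose two extreme corners are occupied by $ST_n$ and $P_n$.
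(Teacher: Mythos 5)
Your proof is correct, and it is organised differently from the paper's. The paper gets the upper bound by citing Theorem~\ref{t:extGRAPHS} directly (same content as your $N\leq n-1$, $\DE\geq 2$ computation), but for the lower bound it runs a contradiction argument: it takes a tree $T$ of minimal dimension, invokes Lemma~\ref{p:paths} to conclude that $T\neq P_n$ would force $\DE(T)\leq n-2$, and then derives $\dim_f(T)>\dim_f(P_n)$, contradicting minimality. You instead observe that the two estimates $N(T)\geq\lceil n/2\rceil$ (from triangle-freeness, exactly the counting step in the proof of Lemma~\ref{p:paths}) and $\DE(T)\leq n-1$ already yield the lower bound by direct monotonicity of $\ln N/\ln\DE$, with no detour through an extremal tree; Lemma~\ref{p:paths} is then needed only to identify $\dim_f(P_n)$ for the attainment claim. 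Your route is the more economical one: it uses the same two structural facts but assembles them as a one-line optimisation over the admissible rectangle of $(N,\DE)$ values, and your explicit verification that $N(ST_n)=n-1$ (each leaf lies in a unique diameter-one set) supplies a detail the paper leaves implicit. The only point worth making explicit in a final write-up is that the denominator manipulation is legitimate because $\ln\lceil n/2\rceil>0$ and $\ln\DE(T)>0$ for $n\geq 4$, which you do note.
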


\begin{proof}
By Theorem~\ref{t:extGRAPHS}, only the lower bound needs to be proved. Note first that $\dim_f(P_n)=\ln\lceil n/2\rceil /\ln(n-1)$. Let $T$ be a tree with smallest dimension; then $\dim_f(T)\leq \dim_f(P_n)$. If $T\neq P_n$, by Lemma~\ref{p:paths}, we may assume, for contradiction, that $\DE(T)\leq n-2$.  Since $N\geq \lceil n/2 \rceil$, we have:
\[
\dim_f(P_n)\geq\dim_f(T)= \frac{\ln N}{\ln \DE(T)} \geq \frac{\ln \lceil n/2\rceil}{\ln (n-2)}> \frac{\ln \lceil n/2\rceil}{\ln (n-1)},
\]
a contradiction. 
\end{proof}

\begin{rem}
In contrast to the case of general graphs, for trees there is only one graph achieving the minimun dimension, namely $P_n$. Note also that for $n\geq 5$, $\dim_f(L_{p,q}) < \dim_f(P_n)$.
\end{rem}

\section{Products of graphs}\label{s:prod}
There are several different definitions of products of graphs (see \cite{ham}), such as the \emph{strong product} $G\boxtimes F$, and the \emph{Cartesian product} $G\square F$. In metric terms, $G\boxtimes F$ is characterised by the fact that the vertex set $V(G\boxtimes F)=V(G)\times V(F)=V(G\square H)$, is given the metric $d_\infty$, whereas in $G\square F$ one gives the vertices the metric $d_1$. Recall that, given spaces $(X,d)$ and $(X',d')$, we have metric spaces $(X\times X',d_j)$, for $j=1,\infty$, where:

\[ 
d_j((x_1,x'_1),(x_2,x'_2)) :=
  \begin{cases}
      d(x_1,x_2)+d'(x'_1,x'_2)  & \quad \text{if } j=1\\
        \max\{d(x_1,x_2),d'(x'_1,x'_2)\}       & \quad \text{if } j =\infty\\
  \end{cases}
\]
for all $x_1,x_2\in X$, $x'_1,x'_2\in X'$. 

\begin{lem}\label{l:nuETC}
Given finite metric spaces $(X,d)$, $(X',d')$, consider $(X\times X',d_j)$, for $j=1,\infty$. The first three identities hold for any of the three product metrics:
\begin{enumerate}
\item $\delta(X\times X')=\min\{\delta(X),\delta(X')\}$,
\item $\nabla(X\times X')=\min\{\nabla(X),\nabla(X')\}$,
\item $\DE(X\times X',d_\infty)=\max\{\DE(X),\DE(X')\}$,
\item $\DE(X\times X',d_1)=\DE(X)+\DE(X')$,
\end{enumerate}
\end{lem}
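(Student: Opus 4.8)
The plan is to prove each identity directly from the definitions of $\delta$, $\nabla$, $\DE$ applied to the product metric, by exploiting the coordinatewise structure of $d_1$ and $d_\infty$. The key observation throughout is that for two points $(x_1,x_1')$ and $(x_2,x_2')$ of $X\times X'$, the product distance depends only on the two coordinate distances $d(x_1,x_2)$ and $d'(x_1',x_2')$, combined either by sum (for $d_1$) or by maximum (for $d_\infty$). I would dispose of the two $\DE$ formulas first, since they are the easiest: the diameter is a maximum over \emph{all} pairs of distinct points, so maximising $\max\{d,d'\}$ or $d+d'$ over independent choices of the two coordinates immediately gives $\max\{\DE(X),\DE(X')\}$ and $\DE(X)+\DE(X')$ respectively. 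This establishes (iii) and (iv).

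For the $\delta$ identity (i), recall $\delta$ is the smallest \emph{positive} distance between distinct points. First I would handle $d_1$: if two distinct points realise the minimum positive distance, then at least one coordinate differs, and the sum $d(x_1,x_2)+d'(x_1',x_2')$ is minimised by leaving one coordinate fixed (contributing $0$) and moving the other by its minimal positive amount; this yields $\min\{\delta(X),\delta(X')\}$. For $d_\infty$ the same configuration (vary one coordinate minimally, hold the other fixed) gives distance $\max\{\delta(X),0\}$ or $\max\{0,\delta(X')\}$, again producing $\min\{\delta(X),\delta(X')\}$. The point to verify carefully is that one cannot do better by moving \emph{both} coordinates, but since any positive contribution in a coordinate is at least that coordinate's own $\delta$, moving both only increases the sum (for $d_1$) and cannot decrease the maximum below $\min\{\delta(X),\delta(X')\}$ (for $d_\infty$).

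The main obstacle is identity (ii), the claim $\nabla(X\times X')=\min\{\nabla(X),\nabla(X')\}$, because $\nabla$ is an intertwined min--max quantity: it is the smallest $\nabla$ such that \emph{every} point has a distinct neighbour within $\nabla$. To prove $\nabla(X\times X')\le\min\{\nabla(X),\nabla(X')\}$, I would, given any point $(x,x')$, produce a close neighbour by fixing the coordinate whose factor has the larger $\nabla$ and moving the coordinate whose factor attains $\min\{\nabla(X),\nabla(X')\}$ to its $\nabla$-witness in that factor; for both $d_1$ and $d_\infty$ this neighbour lies within $\min\{\nabla(X),\nabla(X')\}$, since the unmoved coordinate contributes $0$. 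For the reverse inequality I must show no smaller value works: here I would take a point $x_0$ in the factor realising the smaller $\nabla$ (say $X$, so $\nabla(X)\le\nabla(X')$) whose nearest distinct neighbour in $X$ is at distance exactly $\nabla(X)$, pair it with any fixed $x'$, and argue that any distinct neighbour $(x_2,x_2')$ of $(x_0,x')$ in the product has product distance at least $\nabla(X)$ — this is where I expect the argument to require the most care, since if $x_2=x_0$ then the second coordinate must move and contributes at least $\nabla(X')\ge\nabla(X)$, while if $x_2\ne x_0$ the first coordinate already contributes at least $\nabla(X)$, and both $d_1$ and $d_\infty$ are bounded below by each coordinate's contribution. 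Assembling these two inequalities yields (ii) for both product metrics, completing the proof.
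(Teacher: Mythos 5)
The paper offers no proof of this lemma (it is explicitly left to the reader), so your proposal can only be judged on its own terms. Parts (i), (iii), (iv) and the upper bound $\nabla(X\times X')\le\min\{\nabla(X),\nabla(X')\}$ in (ii) are argued correctly: in each case the essential point is that a pair differing in only one coordinate has product distance equal to that coordinate's distance, while a pair differing in both coordinates has product distance at least each coordinate's distance, under both $d_1$ and $d_\infty$.

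The lower bound in (ii), however, contains a step that fails as stated. You take $x_0\in X$ realising $\nabla(X)$ (assuming $\nabla(X)\le\nabla(X')$), pair it with \emph{any} $x'\in X'$, and claim that if a neighbour $(x_2,x_2')$ of $(x_0,x')$ has $x_2=x_0$ then the second coordinate ``contributes at least $\nabla(X')$''. This reverses the defining inequality of $\nabla$: since $\nabla(X')=\max_{y'}\min_{z'\ne y'}d'(y',z')$, a generic point $x'$ has a neighbour at distance \emph{at most} $\nabla(X')$, possibly much less than $\nabla(X)$. Concretely, let $X=\{a,b\}$ with $d(a,b)=1$ and $X'=\{p,q,r\}$ with $d'(p,q)=0.1$, $d'(q,r)=9.9$, $d'(p,r)=10$; then $\nabla(X)=1\le\nabla(X')=9.9$, but $(a,p)$ has the neighbour $(a,q)$ at product distance $0.1<1$ in both metrics, so not every distinct neighbour of $(a,p)$ is at distance at least $\nabla(X)$. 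The fix is small: choose $x'$ to be a point of $X'$ that itself realises $\nabla(X')$, i.e.\ whose nearest distinct neighbour in $X'$ is at distance exactly $\nabla(X')$. For the pair $(x_0,x')$ so chosen, a neighbour moving only the second coordinate costs at least $\nabla(X')\ge\nabla(X)$, a neighbour moving the first coordinate costs at least $\nabla(X)$, and both $d_1$ and $d_\infty$ dominate each coordinate's contribution; this single point then witnesses $\nabla(X\times X')\ge\min\{\nabla(X),\nabla(X')\}$, and with that correction the proof is complete.
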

\noindent We leave the proof to the reader.

\begin{thm}\label{t:STRONGprod}
Let $G,F$ denote finite simple graphs. Then
\[
\dim_{f}(G\boxtimes F)\leq \dim_{f}(G)+\dim_{f}(F).
\]
\end{thm}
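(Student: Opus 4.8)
The plan is to reduce everything to the explicit box-counting formula of Corollary~\ref{c:formula} and then to control the two graph parameters it involves for the strong product. First I would observe that $G\boxtimes F$ is again locally uniform: by Lemma~\ref{l:nuETC}(ii) together with Corollary~\ref{c:locunif}, $\nabla(G\boxtimes F)=\min\{\nabla(G),\nabla(F)\}=1=\de(G\boxtimes F)$, so formula (\ref{e:formula}) applies and
\[
\dim_f(G\boxtimes F)=\frac{\ln N(G\boxtimes F)}{\ln \DE(G\boxtimes F)}.
\]
By Lemma~\ref{l:nuETC}(iii) we have $\DE(G\boxtimes F)=\max\{\DE(G),\DE(F)\}$, so the whole problem comes down to estimating the clique covering number $N(G\boxtimes F)$ from above.

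The key combinatorial step, and the heart of the argument, is the claim that a product of cliques is a clique in the strong product: if $W\subseteq V(G)$ and $W'\subseteq V(F)$ each have diameter $1$, then $W\times W'$ has $d_\infty$-diameter $1$. This is immediate from the definition of $d_\infty$, since for distinct $(u,u'),(v,v')\in W\times W'$ one has $d_\infty=\max\{d(u,v),d'(u',v')\}$, which is at most $1$ (both coordinates lie in cliques, where distances are $0$ or $1$) and at least $1$ (the points differ in some coordinate, where distinct clique-vertices are at distance exactly $1$). Consequently, given minimal $2$-coverings by cliques $\{W_1,\dots,W_{N(G)}\}$ of $G$ and $\{W'_1,\dots,W'_{N(F)}\}$ of $F$, the family $\{W_i\times W'_j\}_{i,j}$ covers $V(G)\times V(F)$ by cliques, each with at least four vertices, hence is a genuine $2$-covering. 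This yields
\[
N(G\boxtimes F)\leq N(G)\,N(F).
\]

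It then remains to assemble the estimate. Writing $\DE=\max\{\DE(G),\DE(F)\}$ and using $\ln\bigl(N(G)N(F)\bigr)=\ln N(G)+\ln N(F)$ together with $\ln\DE\geq\ln\DE(G)$ and $\ln\DE\geq\ln\DE(F)$, I would split the single fraction into two pieces and bound each denominator below:
\[
\dim_f(G\boxtimes F)\leq\frac{\ln N(G)}{\ln\DE}+\frac{\ln N(F)}{\ln\DE}\leq\frac{\ln N(G)}{\ln\DE(G)}+\frac{\ln N(F)}{\ln\DE(F)}=\dim_f(G)+\dim_f(F),
\]
the last inequality using $\ln N(G),\ln N(F)\geq 0$. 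The degenerate cases are harmless: if $G$ or $F$ equals some $K_n$, the corresponding summand (hence the right-hand side) is $\infty$ and there is nothing to prove; indeed $G\boxtimes F$ is complete precisely when both factors are.

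I do not expect a deep obstacle, but the step requiring genuine care is the clique-product claim. One must exploit that hop-count distances are integer-valued, so that on a product of cliques the only possible values of $d_\infty$ are $0$ and $1$; this is exactly what legitimises each $W_i\times W'_j$ as a $2$-covering element of diameter precisely $\nabla=1$, and it is the point where the choice of the metric $d_\infty$ (rather than $d_1$) is essential. I would also note in passing that $G\boxtimes F$ is connected, since it contains the Cartesian product $G\square F$ as a spanning subgraph, so that $\dim_f(G\boxtimes F)$ is defined in the first place.
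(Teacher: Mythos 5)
Your proposal is correct and follows essentially the same route as the paper: use Lemma~\ref{l:nuETC} to get local uniformity and $\DE(G\boxtimes F)=\max\{\DE(G),\DE(F)\}$, show that the product of two minimal $2$-coverings is a $2$-covering of diameter $1$ so that $N(G\boxtimes F)\leq N(G)N(F)$, and then split the logarithm. Your added verification that each $W_i\times W'_j$ has $d_\infty$-diameter exactly $1$, and your remarks on the degenerate complete-graph case, merely make explicit details the paper leaves to the reader.
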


\begin{proof}
By Lemma~\ref{l:nuETC}, $G\boxtimes F$ is locally uniform and $\DE(G\boxtimes F)=\max\{\DE(G),\DE(F)\}$. Let $\U=\{U_i|i=1,\dots,n\}$ denote a 2-covering of $G$ with $|\U|=N(G)$, and similarly $\U'=\{U'_j|j=1,\dots,m\}$ denote a 2-covering of $F$ with $|\U'|=N(F)$. Then $\U\times \U'$ is a 2-covering of $G\boxtimes F$, with $\DE(\U\times \U')=1$, and $|\U\times \U'|=N(G)N(F)$. Hence, $N(G\boxtimes F)\leq N(G)N(F)$. Then,
\[
\dim_{f}(G\boxtimes F)=\frac{\ln N(G\boxtimes F)}{\ln \DE(G\boxtimes F)} \leq \frac{\ln (N(G)N(F))}{\ln \max\{\DE(G),\DE(F)\}}\leq \frac{\ln N(G)}{\ln \DE(G)}+\frac{\ln N(F)}{\ln \DE(F)} 
\]
as desired.
\end{proof}

\begin{exa} \label{e:Prod=UNT<}
We show by examples that both equality and inequality can hold in Theorem~\ref{t:STRONGprod}. For $G=P_3$ and $F=P_3$, we have $\DE(P_3\boxtimes P_3)=2$ and $N(P_3\boxtimes P_3)=4$. Hence, $\dim_{f}(P_3\boxtimes P_3)=2=\dim_f(P_3)+\dim_f(P_3)$. When $G=P_3$ and $F=P_4$, we have $\DE(P_3\boxtimes P_4)=3$, $N(P_3\boxtimes P_4)=4$, so that $\dim_f(P_3\boxtimes P_4)=\ln 4/\ln 3\simeq 1.26$. On the other hand, $\dim_f(P_3)=1$, $\dim_f(P_4)=\ln 2/\ln 3\simeq 0.63$.
\end{exa} 

Theorem~\ref{t:STRONGprod} is false for the product $G\square F$, as shown by the following examples.

\begin{exa} \label{e:sqProd>}
In this example $\dim_{f}(G\square H)<\dim_f(G)+\dim_f(H)$. Let $G=P_4$ and $H=C_4$. Then $\dim_{f}(G\square H)=\ln 8/\ln 5\simeq 1.29$, $\dim_{f}(G)=\ln 2/\ln 3\simeq 0.63$, and $\dim_f (H)=1$.
\end{exa} 

\begin{exa} \label{e:sqProd+>}
In this example $\dim_{f}(G\square H)>\dim_f(G)+\dim_f(H)$. Let $G=H=L_{3,3}$. Then $\dim_{f}(G\square H)=\ln 12/\ln 6\simeq 1.38$, $\dim_{f}(G)=\ln 2/\ln 3\simeq 0.63$.
\end{exa}

\section{Intrinsic metrics} \label{s:intrMET}

In this section we consider intrinsic metrics derived from non-constant edge-length maps, and contrast them to the default case $g\equiv 1$. When $g$ is not constant there is no explicit formula like (\ref{e:formula}) to compute the finite Hausdorff dimension, and we are forced to use the general definition  (\ref{e:finHAU}).

Lemma \ref{l:isometry=iso} shows that when $g$ is constant, $d_g$ depends only on the graph. On the contrary, when $g$ is not constant, $d_g$ need not reflect much of the graph structure. We illustrate this point in different ways.

\begin{exa}
Consider the graphs $K_3$ (complete graph on 3 vertices) and $P_3$ (path on 3 vertices, of length 2) with edge-length maps $h,g$, respectively. Let $e_1,e_2,e_3$ denote the three edges of $K_3$, and suppose that $h(e_1)=h(e_2)=1$, $h(e_3)=5$, and that $g\equiv 1$. Then $(V(K_3),d_h)$ and $(V(P_3),d_g)$ are isometric but the graphs are not isomorphic, in contrast to Lemma~\ref{l:isometry=iso}. 
\end{exa}

Let $\FIG$ denote the set of all \emph{intrinsic metric graphs}, i.e. spaces $(V,d)$ where $V$ is the set of vertices of a graph $G\neq K_n$, and $d=d_g$, for some edge-lenght map $g:E(G)\to \R$. Let $\FIG_1\subset \FIG$ denote the subset of intrinsic metric graphs with constant $g\equiv 1$. Consider finite Hausdorff dimension as a function on these spaces:

\[
\dim_{fH}: \FIG\to (0,\infty) 
\]

\begin{thm}
The image $\dim_{fH}(\FIG_1)$ is countable and dense in $[1,\infty)$. By contrast, $\dim_{fH}(\FIG)=(0,\infty)$.
\end{thm}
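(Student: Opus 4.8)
The statement bundles three claims: $\dim_{fH}(\FIG_1)$ is (a)~countable and (b)~dense in $[1,\infty)$, and (c)~$\dim_{fH}(\FIG)=(0,\infty)$. Countability is immediate from Corollary~\ref{c:formula2}: every value in $\dim_{fH}(\FIG_1)$ has the form $\ln\vartheta(G)/\ln\DE(G)$ with $\vartheta(G),\DE(G)$ integers that are $\geq 2$ (the latter because $G\neq K_n$), so the image is contained in the countable set $\{\ln p/\ln q:\ p,q\in\N,\ p,q\geq 2\}$. For (b) and (c) the codomain $(0,\infty)$ is already fixed by the way the map is set up, so in both cases I only have to produce enough graphs; the plan is to exhibit explicit two-parameter families in which the two quantities that pin down the dimension can be moved independently.

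For the density statement (b), fix a target $t\geq 1$. The plan is to realize hop-count graphs with prescribed diameter and clique-covering number using a \emph{central broom} $B_{k,r}$: take the path $P_{k+1}$ and attach $r$ pendant leaves to a central vertex $v_{\lfloor k/2\rfloor}$. Attaching leaves centrally keeps $\DE(B_{k,r})=k$ for $k\geq 2$ (the farthest leaf-to-endpoint distance is $1+\lceil k/2\rceil\leq k$), while each leaf is adjacent only to $v_{\lfloor k/2\rfloor}$: in any clique cover at most one leaf can share a clique with that vertex, so adding a leaf forces exactly one new singleton clique. Hence $\vartheta(B_{k,r})$ is affine in $r$ with slope $1$ and attains every integer $\geq c_k$ for some $c_k=O(k)$. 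Thus $\dim_f(B_{k,r})=\ln\vartheta(B_{k,r})/\ln k$, and since $c_k\leq k\leq\lceil k^t\rceil$ for $t\geq 1$ one may choose $r$ so that $\vartheta(B_{k,r})=\lceil k^t\rceil$. Then $\dim_f(B_{k,r})=\ln\lceil k^t\rceil/\ln k\to t$ as $k\to\infty$, which gives density in $[1,\infty)$ without even needing an intermediate-value argument.

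For (c) I would use the family $D_{s,y}$ ($s\geq 2$, $0<y<1$): split $2s$ vertices into two sets $A,B$ of size $s$; let $D_{s,y}$ be the complete graph on $A\cup B$ with one inter-cluster edge $\{a_1,b_1\}$ deleted (so $D_{s,y}\neq K_{2s}$, hence lies in $\FIG$), with edge-length $g\equiv 1$ inside $A$ and inside $B$ and $g\equiv y$ on the surviving edges between $A$ and $B$. A short computation gives $\delta=y$, and a perfect matching of the cheap inter-edges (avoiding $\{a_1,b_1\}$, possible since $s\geq 2$) covers all vertices with covering diameter $y$, so $\nabla=y$ and $D_{s,y}$ is locally uniform. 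Since any diameter-$y$ set contains at most one vertex from each clique, $N_\nabla=s$, while $\DE=d(a_1,b_1)=1+y$. The box formula then yields
\[
\dim_{fH}(D_{s,y})=\frac{\ln s}{\ln\bigl((1+y)/y\bigr)}.
\]
For fixed $s$ this is continuous and strictly decreasing in $y$, running over $(0,\ln s/\ln 2)$ as $y$ runs over $(0,1)$; by the intermediate value theorem it attains every value there, and taking the union over $s\geq 2$ covers all of $(0,\infty)$.

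The bookkeeping (diameters, matchings, the per-leaf increment of $\vartheta$) is routine; the real obstacle in both parts is \emph{decoupling} the two numbers that determine the dimension. In $\FIG_1$ the values are ratios of logarithms of integers, so density cannot come from a single discrete family; the broom is what separates $\DE$ (set by the path) from $\vartheta$ (set by the leaves), and the limit $\ln\lceil k^t\rceil/\ln k\to t$ is what turns a countable set into a dense one. For $\FIG$ the genuinely delicate point—and the step I expect to be hardest to get right—is that the naive "two far-apart clusters" construction forces $\DE/\nabla\geq 2$, because graph geodesics between clusters accumulate two cluster-radii, which caps the attainable dimension at $\ln 2/\ln 3$. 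The resolution is counterintuitive: make the inter-cluster edges \emph{shorter} than the intra-cluster ones, so that the minimal covering collapses to a matching of cheap edges (covering diameter $y$, with $N_\nabla=s$ large) rather than the two big cliques; this frees the ratio $(1+y)/y$ to sweep all of $(2,\infty)$ while $s$ supplies arbitrarily large $N_\nabla$, and the two together reach every positive real.
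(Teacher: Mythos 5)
Parts (a) and (b) of your proposal are correct, and (b) takes a genuinely different route from the paper. The paper splits density into two steps: a number-theoretic lemma showing that $\{\ln k/\ln m \,|\, k\geq m\geq 2\}$ is dense in $[1,\infty)$, plus a construction (a path $P_{m+1}$ with a star glued to its second vertex) realizing every such pair as $(N(G),\DE(G))$. Your brooms accomplish both at once: fixing $t\geq 1$ and letting $k\to\infty$ with $\vartheta(B_{k,r})=\lceil k^t\rceil$ produces attained values converging to $t$, so no separate density lemma is needed; this is arguably cleaner. One bookkeeping slip: in a clique \emph{cover} overlapping cliques are allowed, so each leaf can be covered by the edge-clique consisting of the leaf and the center (not by a singleton), and there is no restriction that only one leaf shares a clique with the center. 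The correct reason $\vartheta$ increases by exactly one per leaf is that no clique contains two leaves, since leaves are pairwise non-adjacent; with that repair your slope-one formula, and hence all of (b), stands.

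Part (c) has a genuine gap. In $D_{s,y}$ you claim $\DE=d(a_1,b_1)=1+y$, but the path $a_1\to b_2\to a_2\to b_1$ uses three length-$y$ edges, so in fact $d(a_1,b_1)=\min\{1+y,\,3y\}$, which equals $3y$ for $y<1/2$. More structurally: the length-$y$ edges form a spanning subgraph ($K_{s,s}$ minus one edge) of hop-diameter $3$, so $\DE(D_{s,y})\leq 3y=3\nabla$ for \emph{every} $y\in(0,1)$. Consequently
\[
\dim_{fH}(D_{s,y})=\frac{\ln s}{\ln\bigl(\min\{(1+y)/y,\,3\}\bigr)}\;\geq\;\frac{\ln s}{\ln 3}\;\geq\;\frac{\ln 2}{\ln 3}\CommaPunct
\]
and your family realizes only $[\ln s/\ln 3,\,\ln s/\ln 2)$ for each $s$; the union over $s\geq 2$ is $[\ln 2/\ln 3,\infty)$, not $(0,\infty)$. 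So the short-circuiting phenomenon you yourself flagged as the delicate point is not avoided by making the inter-cluster edges cheap: it reappears as a \emph{lower} cap on the dimension rather than an upper one, and the values it excludes, $(0,\ln 2/\ln 3)$, are precisely the ones no hop-count graph can attain (Theorem~\ref{t:extGRAPHS}), i.e.\ the most interesting part of the claim $\dim_{fH}(\FIG)=(0,\infty)$.

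The paper sidesteps this by working in a tree, where geodesics are unique and no edge can be bypassed: the double star $G(n,m,x)$ ($n$ leaves at $v_0$, $m$ leaves at $v_1$, middle edge of length $x$, all other edges of length $1$) has $\nabla=1$, $N_\nabla=n+m$, and $\DE=2+x$ with $x$ unbounded; solving $\ln(n+m)/\ln(2+x)=t$ gives $x=(n+m)^{1/t}-2$, admissible (indeed $\geq 1$) once $n+m$ is large enough. The moral is that dimensions near $0$ require $\DE/\nabla\to\infty$, which forces long induced geodesics; in any family like yours, where the cheapest edges already form a spanning subgraph of bounded hop-diameter, the ratio $\DE/\nabla$ stays bounded and the dimension stays bounded away from $0$.
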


\begin{proof}
Countability of the image of $\FIG_1$ is clear since there are only countably many isomorphism classes of finite graphs. To prove density we first show that the set $D:=\{\ln(k)/\ln(m)\,|\, k,m\in\N,\,\,  k, m\geq 2\}$ is dense in $(0,\infty)$; restricting the integers so that $k\geq m \geq 2$, we obtain density in $[1,\infty)$.

We show that every open interval contains an element of $D$. We seek integers $k,m$ such that $0\leq p/q \leq  \ln(k)/\ln(m)  \leq  r/q$, where we have taken the endpoints to be rational numbers, and $0\leq p<r$. Taking $m:=b^q$, for some integer $b\geq 2$, the above inequalities become $b^p \leq k \leq b^r$, hence it suffices to show that $b^r-b^p \geq 1$. But $b^r-b^p=b^p(b^{r-p}-1)\geq b^{r-p}-1 \geq b-1\geq 1$, as desired. Suppose, moreover, that $1\leq p/q$. Then, $k \geq b^p\geq b^q=m$, as desired. 

To complete the proof, it suffices to construct, for each $k\geq m\geq 2$, a finite graph $G$, with $T=T(G)=k$, and $\DE=\DE(G)=m$. We start with a path $P:=P_{m+1}$ of diameter $m$; $P$ has a 2-covering with $\lceil (m+1)/2\rceil$ elements. Set $c:= k-\lceil (m+1)/2\rceil\geq 1$. Suppose $P=e_1\dots e_m$, with successive vertices $v_0,v_1,\dots,v_m$, and attach a star $St_c$ to $P$, by identifying the center of the star with $v_1$. The resulting graph has finite dimension $\ln(k)/\ln(m)$, as desired.

We now prove that $\dim_\hh(\FIG)=[0,\infty)$.  Suppose that $t\in (0,\infty)$, and consider graphs $G=G(n,m,x)\in \FIG$, where $n,m\in\N$, where $n,m\geq 1$, and $x > 0$ is a real number. By definition, $G$ is a tree with $n+m+1$ edges, where $n$ edges are joined to a single vertex, say $v_0$, and $m$ edges are joined to another vertex $v_1$. The other edge, $e$, joins $v_0,v_1$. Define $g$ to be 1 on every edge except $e$, and $g(e):=x$. With $d_g$, $G$ is locally uniform, has diameter $2+x$, and $N(G)=n+m$. Solving $\dim_{fH}(G)=\ln(n+m)/\ln(2+x)=t$, gives $x=(n+m)^{1/t}-2$. Hence $x> 0$, provided we choose $n+m> 2^t$. With this choice, $\dim_{fH}(G)=t$, as desired.
\end{proof}

\vspace{3mm}


\section{Finite Metric Spaces}\label{s:intrinsic}

In this section we consider the set $\FM$ of \emph{all} finite metric spaces
 and define $\FI$, the subset of \emph{ intrinsic (or length) metric spaces}. We show that $\FI=\FIG$.

As in the continuous case (see, e.g. \cite{bbi}), we define \emph{arcs} (to distinguish them from continuous paths), their lengths and from this an intrinsic (or length) metric. Here are the details. Let $(F,d)\in \FM$ be arbitrary. An \emph{arc} $a$ in $F$ joining $x\neq y$ is a sequence of points $x=x_0,x_1,\dots,x_k=y$ of $F$. Two consecutive points of an arc are called \emph{segment}. The segments of $a$ are $s_i:=\{x_{i-1},x_i\}$; we can also write $a=s_1\dots s_k$. The \emph{count}, $c(a):=k\geq 1$, is the number of segments of $a$, and the \emph{length} $\ell(a)$ is given by:
\begin{equation} \label{}
\ell(a):=\sum_{i=1}^{k-1} d(x_i,x_{i+1}).
\end{equation}
An arc $a$ joining $x,y$ is a \emph{geodesic} if $\ell(a)=d(x,y)$.

An \emph{intrinsic metric space} is, roughly speaking, a space with the property that pairs of different points can be joined by geodesics, i.e. these are spaces for which the distance between points is the length of an arc joining them. If the space has $n$ elements
we can choose one geodesic por each pair, for a total of $n(n-1)/2$ geodesics. Moreover, we can choose these geodesics to be \emph{maximal}, in the sense that their count is maximal among all geodesics joining the points.

\begin{defn} \label{d:intrins}
A space $(F,d)\in \FM$  is an \emph{intrinsic} (or \emph{length}) space if there is a family $\CG$ of maximal geodesics, one for each (non-ordered) pair of different points of $F$, such that $\max\{ c(a) | a\in \CG   \}\geq 2$.
\end{defn}

\begin{rem} 
(a) Note that if $s_i:=\{x_{i-1},x_i\}$ is a segment of some $a\in\CG$, then the arc $\hat{a}:=s_i$ is the only maximal geodesic joining $x_{i-1},x_i$ (hence, $s_i\in \CG$). Indeed, if there is a geodesic $b$ joining $x_{i-1},x_i$, with $c(b)\geq 2$, then $a$ is not maximal, since replacing $s_i$ by $b$ in $a$,  would give a new geodesic $a'$ with $c(a')>c(a)$. 

(b) Let $\CGS:=\{a\in \CG|c(a)=1\}$ denote the set of \emph{geodesic segments}. Def.~\ref{d:intrins} requires that $\CG\setminus \CGS$, the set of "honest" arcs (i.e. arcs with count $>1$), be non-empty.
\end{rem}

\begin{exa}   
Consider a space $F$ with $n$ elements in which the distance between different points is $=1$. This space is not intrinsic since $\CG=\CGS$.
\end{exa}

\begin{prop} \label{p:invariance}
Suppose that $(F,d)$ and $(F',d')$ are isometric spaces. If $F$ is intrinsic then so is $F'$.
\end{prop}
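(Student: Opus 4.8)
The plan is to show that being intrinsic is preserved under isometry by transporting the witnessing family of maximal geodesics from $F$ to $F'$ along the isometry. Let $\varphi:F\to F'$ be the isometry, so $d'(\varphi(x),\varphi(y))=d(x,y)$ for all $x,y\in F$. Given an arc $a=x_0,x_1,\dots,x_k$ in $F$, I would define its image $\varphi(a):=\varphi(x_0),\varphi(x_1),\dots,\varphi(x_k)$, which is an arc in $F'$ joining $\varphi(x_0),\varphi(x_k)$ with the same count $c(\varphi(a))=c(a)$.

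The key observation is that $\varphi$ preserves arc lengths: since $\ell(a)=\sum_{i} d(x_i,x_{i+1})$ and each term is preserved by $\varphi$, we have $\ell(\varphi(a))=\ell(a)$. Combined with $d'(\varphi(x),\varphi(y))=d(x,y)$, this shows that $a$ is a geodesic joining $x,y$ in $F$ if and only if $\varphi(a)$ is a geodesic joining $\varphi(x),\varphi(y)$ in $F'$. The next step is to verify that $\varphi$ also preserves \emph{maximality}: since $\varphi$ is a bijection sending arcs between $x,y$ bijectively to arcs between $\varphi(x),\varphi(y)$ while preserving both the geodesic property and the count, a geodesic is maximal (has maximal count among geodesics joining its endpoints) if and only if its image is.

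With these facts in hand, I would take the family $\CG$ of maximal geodesics witnessing that $F$ is intrinsic and set $\CG':=\{\varphi(a)\mid a\in\CG\}$. Since $\varphi$ is a bijection on unordered pairs of distinct points, $\CG'$ contains exactly one maximal geodesic for each pair of distinct points of $F'$. Finally, the condition $\max\{c(a)\mid a\in\CG\}\geq 2$ transfers directly because $c(\varphi(a))=c(a)$, so $\max\{c(a')\mid a'\in\CG'\}\geq 2$ as well. This exhibits $\CG'$ as a witnessing family and shows $F'$ is intrinsic.

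I expect no serious obstacle here; the proof is essentially bookkeeping, since an isometry preserves all the metric data entering the definition. The only point requiring slight care is the preservation of maximality, which must be argued through the bijective correspondence between arcs rather than asserted directly; everything else (length preservation, the geodesic condition, the count, the one-geodesic-per-pair requirement, and the count-$\geq 2$ condition) follows immediately from $\varphi$ being a distance-preserving bijection.
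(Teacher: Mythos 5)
Your proof is correct and follows essentially the same route as the paper's: transport the family $\CG$ along the isometry, noting that lengths, the geodesic property, counts, and maximality are all preserved. The only difference is that you justify the preservation of maximality via the bijective correspondence between arcs, whereas the paper simply asserts it as clear.
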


\begin{proof} Suppose that $F$ is intrinsic with maximal geodesics $\CG$, and let $f:F\to F'$ be an isometry. For an arc $a=s_1\dots s_k \in \CG$, define $f(a):=f(s_1)\dots f(s_k)$, where $f(s_i):=\{f(x_{i-1}),f(x_i)\}$, if $s_i=\{x_{i-1},x_i\}$. Since $f$ is an isometry, $\ell(s)=\ell(f(s))$ and hence $\ell(a)=\ell(f(a))$, for all arcs. Hence the set $\CG':=\{\{f(a)\}|a \in \CG\}$ is a set of geodesics of $F'$, one for every pair of different points. Clearly, $f(a)$ is maximal if $a$ is.  Moreover, since $f$ preserves count, $\CG'\neq \CGS'$, as desired.
\end{proof}

We now show that intrinsic metric spaces can be represented by intrinsic metric graphs. Suppose $(F,d)$ is intrinsic with geodesics $\CG$, and geodesic segments $\CGS$. We define a graph $G=G(F,\CG)$, with edge-length map $g$, as follows. Set $V(G):=F$, $E(G):=\{\{x,y\}|\{x,y\} \in \CGS\}$, and $g(\{x,y\}):=d(x,y)$. Under the identity map, points correspond to vertices, geodesic segments to edges, and geodesic arcs in $F$ to geodesic paths of $G$. Observe that $G\neq K_n$.

\begin{prop}\label{p:MtoG}
$G=(F,\CG)$ is a connected graph, and $g$ is an edge-length function. Moreover, $(V,d_g)$ is an intrinsic metric graph, and $(F,d)$ is isometric to $(G,d_g)$.
\end{prop}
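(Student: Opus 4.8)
The plan is to prove the four assertions with the metric identity $d_g = d$ as the crux; since the construction sets $V(G) = F$, I will show that the identity map realises the isometry, after which the remaining claims come cheaply.

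First I would settle connectedness and the edge-length condition. Given distinct $x, y \in F$, intrinsicality of $(F,d)$ supplies a maximal geodesic $a = s_1 \dots s_k \in \CG$ joining them. By Remark (a), each segment $s_i$ of a maximal geodesic is itself the unique maximal geodesic joining its endpoints, hence $s_i \in \CGS = E(G)$. Therefore $a$ is, under the identity, exactly a path in $G$ from $x$ to $y$, and since $x,y$ were arbitrary, $G$ is connected. That $g$ is an edge-length function is immediate, since an edge $\{x,y\}$ joins distinct points and so $g(\{x,y\}) = d(x,y) > 0$.

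The core step is to prove $d_g(x,y) = d(x,y)$ for all $x,y$, which I would do by two inequalities. For $d_g(x,y) \le d(x,y)$, I reuse the geodesic $a$ above: read as a path $p$ in $G$, its length is $\ell_g(p) = \sum_i g(s_i) = \sum_i d(x_{i-1},x_i) = \ell(a) = d(x,y)$, the final equality holding because $a$ is a geodesic; hence $d_g(x,y) \le \ell_g(p) = d(x,y)$. For the reverse inequality I would take an arbitrary path $p = e_1 \dots e_n$ in $G$ with consecutive vertices $x = w_0, \dots, w_n = y$, note that each $e_i = \{w_{i-1},w_i\}$ is a geodesic segment so $g(e_i) = d(w_{i-1}, w_i)$, and apply the triangle inequality for $d$ repeatedly to get $\ell_g(p) = \sum_i d(w_{i-1},w_i) \ge d(x,y)$; minimising over $p$ gives $d_g(x,y) \ge d(x,y)$. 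Combining, $d_g = d$, so the identity is an isometry of $(F,d)$ onto $(V,d_g) = (G,d_g)$.

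It remains to record that $(V,d_g)$ is an intrinsic metric graph, which by definition needs only that $g$ be an edge-length function (already shown) together with $G \neq K_n$. For the latter, intrinsicality guarantees some $a \in \CG$ with $c(a) \ge 2$; the pair of endpoints of this $a$ is then not in $\CGS$, hence not an edge of $G$, so $G$ is not complete. I expect the only real care to be needed in the isometry step: invoking Remark (a) at the right moment so that segments of maximal geodesics are genuinely edges of $G$, and placing the geodesic hypothesis precisely where the arc length collapses to $d(x,y)$.
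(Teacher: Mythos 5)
Your proof is correct and follows essentially the same route as the paper's, which likewise reads each maximal geodesic of $\CG$ as a path in $G$ to obtain both connectedness and the isometry. The paper simply asserts the chain $d(x,y)=\ell(a)=\ell(p(a))=d_g(x,y)$; your two-inequality argument (in particular the triangle-inequality bound giving $d_g\ge d$) supplies the detail it leaves implicit, as does your justification that $G\neq K_n$.
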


\begin{proof}
Clearly, $G$ is a simple graph. For any pair of different points $x,y\in F$, there is an arc $a$ in $\CG$ joining them. In $G$, this translates to a (geodesic) path  $p(a)$ joining $x,y\in G$, so $G$ is connected. Clearly, $g$ is an edge-length function, since $g(\{x,y\})>0$, and $G(F)$ is an intrinsic metric graph. Finally, $d(x,y)=\ell(a)=\ell(p(a))=d_g(x,y)$ shows that $F$ and $G(F)$ are isometric. The proof is complete.
\end{proof}

\begin{thm}
A metric space $(F,d)$ is intrinsic iff there is an intrinsic graph $G$ isometric to $F$. In other words, $\FIG=\FI$.
\end{thm}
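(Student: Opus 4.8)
The plan is to prove the set equality $\FIG=\FI$ by establishing the two inclusions separately, each handed to us almost directly by one of the two propositions above. The inclusion $\FI\subseteq\FIG$ (the direction ``$F$ intrinsic $\Rightarrow$ there is an isometric intrinsic graph'') is, in effect, the content of Proposition~\ref{p:MtoG}: given an intrinsic space $(F,d)$ together with its distinguished family $\CG$ of maximal geodesics, the construction $G=G(F,\CG)$ with edge-length map $g(\{x,y\}):=d(x,y)$ produces, by that proposition, an intrinsic metric graph $(V,d_g)$ isometric to $(F,d)$. So I would simply quote Proposition~\ref{p:MtoG} here, with no further work needed.

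For the reverse inclusion $\FIG\subseteq\FI$ I would proceed in two steps: first show that an arbitrary intrinsic metric graph $(V,d_g)$ satisfies Definition~\ref{d:intrins}, and then transport this property along the isometry to $F$ using Proposition~\ref{p:invariance}. To exhibit the required family $\CG$, for each pair of vertices $x,y$ I would pick a path $x=v_0,\dots,v_k=y$ realising $d_g(x,y)$ and regard it as an arc in the metric space $(V,d_g)$. That this arc is a geodesic follows from the sandwich $d_g(x,y)\le\sum_i d_g(v_{i-1},v_i)\le\sum_i g(e_i)=d_g(x,y)$, where $e_i$ is the edge joining $v_{i-1}$ to $v_i$: the first inequality is the triangle inequality, the second holds because $e_i$ is itself a path between its endpoints, and the last equality is the defining property of a shortest path. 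Equality throughout forces $\ell(a)=d_g(x,y)$, and incidentally $d_g(v_{i-1},v_i)=g(e_i)$, so each edge on a shortest path is a geodesic segment. Selecting, for every pair, a geodesic of \emph{maximal} count produces the candidate family $\CG$.

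The consistency requirement recorded in part~(a) of the remark following Definition~\ref{d:intrins} is then automatic rather than an obstacle: were a segment $\{u,v\}$ inside some chosen maximal geodesic to admit a longer geodesic joining $u,v$, splicing the latter in would yield a strictly longer geodesic for the outer pair, contradicting maximality; hence $\{u,v\}$ is forced to be the unique maximal geodesic of its endpoints. The single genuinely essential point is the count condition $\max\{c(a)\mid a\in\CG\}\ge 2$, and this is exactly where the hypothesis $G\neq K_n$ built into the definition of an intrinsic metric graph earns its keep: since $G$ is not complete it has a non-adjacent pair $x,y$, whose shortest path uses at least two edges, so the maximal geodesic joining $x,y$ has count $\ge 2$. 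This is the crux of the argument; everything else is bookkeeping. With $(V,d_g)\in\FI$ in hand, Proposition~\ref{p:invariance} completes the reverse inclusion, and the two inclusions together give $\FIG=\FI$.
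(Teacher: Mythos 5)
Your proposal is correct and follows essentially the same route as the paper: the forward inclusion is quoted from Proposition~\ref{p:MtoG}, and the reverse inclusion is obtained by turning maximal-count shortest paths into a family $\CG$ of maximal geodesic arcs and using $G\neq K_n$ to produce one of count $\geq 2$. If anything, your version is slightly more careful than the paper's, which asserts $\CGS=E(G)$ (not every edge need be a geodesic segment when $g$ is non-constant), whereas you only use the weaker and fully justified fact that a non-adjacent pair forces $\CG\neq\CGS$; your explicit appeal to Proposition~\ref{p:invariance} to transport the property back to $F$ is also a welcome touch the paper leaves implicit.
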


\begin{proof} Suppose $(F,d)\in \FI$ is intrinsic, with geodesics $\CG$. By Prop.~\ref{p:MtoG}, $G=G(F,\CG)$ is the desired intrinsic graph.

Conversely, given $(G,g)\in \FIG$ we show that $(V,d_g)\in \FI$. Given $v_0\neq v_1\in V$, choose a geodesic path $p(v_0,v_1)$ in $G$, joining $v_0, v_1$.  Such path exists because the distance in $G$ is $d_g$. The \emph{count} of a path in $G$ is the number of its edges. We can then choose a geodesic path $p(v_0,v_1)$ which is maximal with respect to count. Call $\CG$ the set of such geodesics. We can obviously consider (maximal) geodesics in $G$ as (maximal) arcs in $V$. Since count of paths coincides with count of arcs, it follows that $\CGS=E(G)\neq \CG$, because $G\neq K_n$. This concludes the proof.
\end{proof}




\end{document}